\theoremstyle{plain}
\newtheorem{theorem}{Theorem}[section]
\newtheorem{proposition}{Proposition}[section]%[theorem]
\newtheorem{lemma}{Lemma}[section]%[theorem]
\newtheorem{remark}{Remark}[section]
\numberwithin{equation}{section}
\dedicatory{Dedicated to professor Iskander Asanovich Taimanov on the occasion of his 60th birthday}
\title[Magnetic curves] 
{Homogeneity of magnetic trajectories \\
in the real special linear group}
\author[J. Inoguchi]{Jun-ichi Inoguchi}
\address[J. Inoguchi]
{Department of Mathematics,
Hokkaido University
Sapporo
060-0810 Japan}
\email{inoguchi@math.sci.hokudai.ac.jp}
\author[M.~I.~Munteanu]{Marian Ioan Munteanu}
\address[M.~I.~Munteanu]
{University 'Al. I. Cuza' of Iasi, 
Faculty of Mathematics, Bd. Carol I, no.~11,
700506 Iasi, Romania}
\email{marian.ioan.munteanu@gmail.com}
\date{{\color{blue}{December 18, 2023 (published electronically)\\
Proc. Amer. Math. Soc.\qquad
DOI: https://doi.org/10.1090/proc/16596 }} (an abridged version)}
\begin{document}

\begin{abstract}
We prove the  homogeneity of contact magnetic curves in the real 
special linear group of degree $2$. Every 
contact magnetic trajectory is a product of a
homogeneous geodesic and a 
charged Reeb flow.
\end{abstract}

\keywords{magnetic field; hyperbolic Sasakian space form; homogeneous geodesic}

\subjclass[2000]{53C15, 53C25, 53C30, 37J45, 53C80}

\maketitle

\section*{Introduction}
According to Thurston's  
\emph{Geometrization Conjecture} (proved by Perelman), 
any compact orientable $3$-fold can be cut in a special way into pieces 
admitting one of the eight geometric structures: 
the Euclidean, spherical, and hyperbolic structures 
($\mathbb{E}^3$, $\mathbb{S}^3$, $\mathbb{H}^3$), 
two product manifolds ($\mathbb{S}^{2}\times\mathbb{R}$ and 
$\mathbb{H}^{2}\times\mathbb{R}$), 
and the $3$-dimensional Lie groups ($\mathrm{Nil}_3$, $\mathrm{Sol}_3$ and 
$\widetilde{\mathrm{SL}}_2\mathbb{R}$). Each model spaces admits compatible 
Riemannian metric. The resulting Riemannian $3$-manifolds 
are naturally reductive homogeneous spaces except $\mathrm{Sol}_3$. 
Notice that compact Riemannian symmetric spaces are normal homogeneous spaces. 
It should be remarked that the class of naturally reductive homogeneous spaces 
is strictly wider than that of normal homogeneous space. Indeed, among the eight 
model spaces of Thurston geometry, the only normal homogeneous spaces are spheres.
The class of simply connected $3$-dimensional 
naturally reductive homogeneous space is exhausted by 
the above seven model spaces together with Berger $3$-spheres.

Dynamical systems 
on $3$-manifolds, especially, the model spaces (and their compact 
quotients) of Thurston geometry have been 
paid much attention of mathematicians. 
An important study concerning the relation between the integrability and the
vanishing of the topological entropy of the geodesic flow is done by 
Bolsinov and Taimanov. In \cite{BT}, they gave a 
$\mathbb{T}^2$-bundle over a circle $\mathbb{S}^1$ which provides 
an example of Liouville integrable geodesic flow with positive topological entropy 
and vanishing Liouville entropy.

We concentrate our attention to $\widetilde{\mathrm{SL}}_{2}\mathbb{R}$
and its quotients. The special linear group 
$\mathrm{SL}_2\mathbb{R}$ is a \emph{non-normal} naturally 
reductive homogeneous space. Thus all of geodesics 
are \emph{homogeneous}. This fact implies that 
geodesic flows of $\mathrm{SL}_2\mathbb{R}$ are 
\emph{integrable} in non-commutative sense 
(see \cite{BJ03,Jov}). 
According to \cite{BVY}, the fact that the modular 
$3$-fold $\mathrm{PSL}_{2}\mathbb{R}/\mathrm{PSL}_{2}\mathbb{Z}$ 
is topologically equivalent to the complement of the 
trefoil $\mathcal{K}$ in the $3$-sphere was first observed by Quillen 
(see Milnor \cite{Milnor}). 
Bolsinov, Veselov and Ye \cite{BVY} proved that
the periodic geodesics on the modular $3$-fold 
$\mathrm{SL}_{2}\mathbb{R}/\mathrm{SL}_{2}\mathbb{Z}$ 
with sufficiently large values of $\mathcal{C}=\kappa^2/16$
represent trefoil cable knots in 
$\mathbb{S}^3\smallsetminus \mathcal{K}$, where $\kappa$ is the 
geodesic curvature of the projected curve in the hyperbolic surface. 
Any trefoil cable knot can be described in this way.
These facts really prove that geodesics flows in $\mathrm{SL}_2\mathbb{R}$ 
and its compact quotients are very attractive dynamical system from both 
differential geometry and low-dimensional topology perspectives.

It would be interesting to perturbe geodesic flows of 
$\mathrm{SL}_2\mathbb{R}$. Concerning this 
viewpoint, Arnol'd \cite{Arnold61} suggested to 
perturbe the symplectic  form of the 
Hamiltonian system of geodesic flow of arbitrary Riemannian 
manifolds by a closed $2$-form on the configuration manifold. 
The $2$-form is regarded as a \emph{static magnetic field} 
of the configuration manifold. Since then 
magnetic trajectories on Riemannian manifolds 
have been paid much attention of mathematicians. 
In particular, analysis and  classification of periodic 
trajectories have been studied extensively, see 
\textit{e.g.}, Bahri and Taimanov \cite{BaTa}. 
On the special linear group 
$\mathrm{SL}_2\mathbb{R}$, there exists a 
unit Killing vector field $\xi$ (Reeb vector field) which is identified 
with the magnetic field (called the 
\emph{contact magnetic field}) whose potential is 
the canonical contact structure. 
The differential equation of the magnetic 
trajectory strongly reflects 
geometric properties of $\mathrm{SL}_2\mathbb{R}$. 
In this sense the system of contact magnetic trajectory is a 
nice perturbation of geodesic flow. 
In our previous work \cite{IM1} we investigated 
periodicity of contact magnetic trajectories in 
$\mathrm{SL}_2\mathbb{R}$.

In \cite{BJ}, 
Bolsinov and Jovanovi{\'c} studied 
magnetic trajectories in normal homogeneous 
Riemannian spaces. They showed that 
every magnetic trajectory 
starting at the origin 
is homogeneous (see \cite[Remark 1]{BJ}). 
We expect the homogeneity of magnetic trajectories
on more wider class of homogeneous Riemannian spaces. 
As our first attempt, in this article we prove that 
magnetic trajectories in the special linear group $\mathrm{SL}_2\mathbb{R}$ 
(derived from the canonical contact structure) are homogeneous.

More precisely, we show that every contact magnetic trajectory 
$\gamma$ of charge $q$ starting at the origin 
$\mathrm{Id}$ of $\mathrm{SL}_2\mathbb{R}$ with initial velocity $X$ 
and with charge $q$ is the product of the homogeneous geodesic 
$\gamma_{X}(s)$ with initial velocity $X$ and the 
\emph{charged Reeb flow} $\exp_{\mathrm{SO}(2)}\{s(2q\xi)\}$.

\section{Preliminaries}
\subsection{Homogeneous geometry}
Let $N=\mathcal{G}/\mathcal{H}$ be a homogeneous space. 
Denote by $\mathfrak{G}$ and $\mathfrak{H}$ the Lie algebras 
of $\mathcal{G}$ and $\mathcal{H}$, respectively. 
Then $N$ is said to be \emph{reductive} if there exits a 
linear subspace $\mathfrak{p}$ of $\mathfrak{G}$ complementary 
to $\mathfrak{H}$ and satisfies 
$[\mathfrak{H},\mathfrak{p}]\subset\mathfrak{p}$. It is known that 
every homogeneous Riemannian space is reductive. 

Now let $N=\mathcal{G}/\mathcal{H}$ be a homogeneous Riemannian space with 
reductive decomposition $\mathfrak{G}=\mathfrak{H}+\mathfrak{p}$ and a 
$\mathcal{G}$-invariant Riemannian metric 
$g=\langle\cdot,\cdot\rangle$. Then $N$ is said to be \emph{naturally reductive} 
(with respect to $\mathfrak{p}$) if it satisfies
\begin{equation}\label{eq:NatRed}
\langle [X,Y]_{\mathfrak{p}},Z\rangle
+\langle Y, [X,Z]_{\mathfrak{p}}\rangle=0
\end{equation}
for any $X$, $Y$, $Z\in\mathfrak{p}$. Here 
we denote the $\mathfrak{p}$ part of a vector 
$X\in\mathfrak{G}$ by $X_{\mathfrak{p}}$. 

A homogeneous Riemannian space 
$N=\mathcal{G}/\mathcal{H}$ is said to be 
\emph{normal} if $G$ is compact semi-simple and 
the metric is derived from the restriction 
of the bi-invariant Riemannian metric of $G$. 
Normal homogeneous spaces are naturally reductive.

As a generalization of naturally reductive homogeneous 
space, the notion of Riemannian g.~o.~space 
was introduced by Kowalski and Vanhecke \cite{KV}. 

According to \cite{KV}, a homogeneous Riemannian manifold 
$N=\mathcal{G}/\mathcal{H}$ is called a 
\emph{space with homogeneous geodesics} or a 
\emph{Riemannian g.o.~space} if every geodesic 
$\gamma(s)$ of $M$ is an orbit of a one-parameter subgroup of 
the \emph{largest} connected group of isometries. 
Naturally reductive homogenous spaces are typical examples of 
Riemannian g.o.~spaces. (For more informations, we refer to \cite{Av}).

\subsection{The Euler-Arnold equation}
Let $G$ be a Lie group equipped with
a left invariant Riemannian metric $\langle\cdot,\cdot\rangle$. Denote 
by $\mathfrak{g}$ the Lie algebra of $G$. 
The bi-invariance obstruction ${\mathsf{U}}$ is a 
symmetric bilinear map 
$\mathsf{U}:\mathfrak{g}\times\mathfrak{g}\to\mathfrak{g}$ 
defined by
\begin{equation}\label{eq:U}
2\langle {\mathsf{U}}(X,Y),Z \rangle=
-\langle X,[Y,Z]\rangle+\langle Y,[Z,X] \rangle,
\ X,Y \in {\frak g}
\end{equation}
The Levi-Civita connection $\nabla$ is described as
\[
\nabla_{X}Y=\frac{1}{2}[X,Y]+\mathsf{U}(X,Y),
\quad X,Y\in\mathfrak{g}.
\]
On the Lie algebra $\mathfrak{g}$, we define 
the linear operator $\mathrm{ad}^{*}:\mathfrak{g}
\to\mathfrak{gl}(\mathfrak{g})$ by
\[
\langle \mathrm{ad}(X)Y,Z\rangle
=\langle Y,\mathrm{ad}^{*}(X)Z\rangle,
\quad X,Y,Z\in\mathfrak{g}.
\]
One can see that 
\[
-2\mathsf{U}(X,Y)=\mathrm{ad}^{*}(X)Y+\mathrm{ad}^{*}(Y)X.
\]
Take a curve $\gamma(s)$ starting at the origin (identity $\mathrm{Id}$) of $G$. 
Set $\varOmega(s)=\gamma(s)^{-1}\dot{\gamma}(s)$, then 
one can check that 
\begin{equation}\label{eq:1.1}
\nabla_{\dot{\gamma}}\dot{\gamma}
=\gamma\left(
\dot{\varOmega}-\mathrm{ad}^{*}(\varOmega)\varOmega
\right).
\end{equation}
This implies the 
following fundamental fact \cite[Appendix 2]{Arnold}:
\begin{proposition}
The curve $\gamma(s)$ is a geodesic if and only if
\[
\dot{\varOmega}=\mathrm{ad}^{*}(\varOmega)\varOmega.
\]
\end{proposition}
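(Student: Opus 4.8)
The plan is to read the proposition off equation \eqref{eq:1.1}, which already carries all the geometric content. By definition $\gamma(s)$ is a geodesic precisely when $\nabla_{\dot\gamma}\dot\gamma=0$. In \eqref{eq:1.1} the symbol $\gamma$ denotes left translation $\mathrm{d}L_{\gamma(s)}\colon\mathfrak g\to T_{\gamma(s)}G$, which is a linear isomorphism and hence has trivial kernel. Therefore $\gamma\bigl(\dot\varOmega-\mathrm{ad}^{*}(\varOmega)\varOmega\bigr)=0$ holds if and only if $\dot\varOmega-\mathrm{ad}^{*}(\varOmega)\varOmega=0$, which is exactly the asserted Euler--Arnold equation. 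So the only thing that requires genuine justification is \eqref{eq:1.1} itself, which is presented above merely as ``one can check''.

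To establish \eqref{eq:1.1} I would first transport the velocity back to the Lie algebra. Writing $\varOmega(s)=\gamma(s)^{-1}\dot\gamma(s)\in\mathfrak g$ and fixing a left-invariant frame $\{E_i\}$ with $\varOmega(s)=\sum_i\varOmega^i(s)\,E_i$ at $\mathrm{Id}$, the velocity along the curve is $\dot\gamma(s)=\sum_i\varOmega^i(s)\,(E_i)_{\gamma(s)}$. Differentiating covariantly along $\gamma$ and using that the $E_i$ are left invariant, $\nabla_{\dot\gamma}\dot\gamma$ splits into a coefficient-derivative part and a connection part,
\[
\nabla_{\dot\gamma}\dot\gamma
=\sum_i\dot\varOmega^{\,i}\,(E_i)_{\gamma}
+\sum_{i,j}\varOmega^i\varOmega^j\,(\nabla_{E_i}E_j)_{\gamma}
=\gamma\,\dot\varOmega+\gamma\,\nabla_{\varOmega}\varOmega,
\]
where $\nabla_\varOmega\varOmega$ denotes the left-invariant Levi--Civita connection evaluated on $\varOmega$, and $\dot\varOmega^{\,i}=\tfrac{d}{ds}\varOmega^i$.

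It then remains to identify $\nabla_\varOmega\varOmega$. By the stated formula $\nabla_{X}Y=\tfrac12[X,Y]+\mathsf U(X,Y)$ together with $[\varOmega,\varOmega]=0$, one gets $\nabla_\varOmega\varOmega=\mathsf U(\varOmega,\varOmega)$, and the identity $-2\mathsf U(X,Y)=\mathrm{ad}^{*}(X)Y+\mathrm{ad}^{*}(Y)X$ evaluated at $X=Y=\varOmega$ gives $\mathsf U(\varOmega,\varOmega)=-\mathrm{ad}^{*}(\varOmega)\varOmega$. Substituting this back reproduces \eqref{eq:1.1} verbatim, and the proposition follows by the isomorphism argument above. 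The one delicate point, and the step I would be most careful about, is the splitting in the displayed line: since $\varOmega$ is time dependent while the $E_i$ are only left invariant, one must verify that the derivative falling on the coefficients $\varOmega^i$ reassembles into $\gamma\,\dot\varOmega$, and that the quadratic term is genuinely the left-invariant connection $\nabla_{E_i}E_j$ (so that the algebraic formula for $\mathsf U$ applies) rather than carrying an extra curve-dependent contribution.
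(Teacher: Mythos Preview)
Your proposal is correct and follows exactly the paper's approach: the proposition is read off directly from \eqref{eq:1.1}, since left translation by $\gamma(s)$ is a linear isomorphism. The paper itself does not prove \eqref{eq:1.1} (it merely says ``one can check''), so your derivation via a left-invariant frame, the formula $\nabla_X Y=\tfrac12[X,Y]+\mathsf U(X,Y)$, and the identity $\mathsf U(\varOmega,\varOmega)=-\mathrm{ad}^*(\varOmega)\varOmega$ supplies strictly more detail than the paper gives.
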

Now let us assume that $G$ is semi-simple. Then 
$G$ admits a bi-invariant semi-Riemannian metric  
$\mathsf{B}$. It is a constant multiple of the 
Killing form. On this reason we call the metric by the name 
(normalized) \emph{Killing metric} and denote 
it by $\mathsf{B}$. Introduce an endomorphism field 
$\mathcal{I}$ (called the \emph{moment of inertia tensor field}) on $\mathfrak{g}$ by
\[
\langle X,Y\rangle=\mathsf{B}(\mathcal{I}X,Y),
\quad \mathcal{I}X,Y\in\mathfrak{g}
\]
and the (\emph{anglar}) \emph{momentum} (see \cite[Appendix 2.C]{Arnold}) by
\[
\mu=\mathcal{I}\varOmega.
\]
Then $\dot{\varOmega}=\mathcal{I}^{-1}\dot{\mu}$ and hence 
\[
\langle\nabla_{\dot{\gamma}}\dot{\gamma},\gamma Z
\rangle
=\langle \mathcal{I}^{-1}(\dot{\mu}-[\mu,\varOmega]),Z
\rangle.
\]
Thus we arrive at the so-called 
\emph{Euler-Arnold equation} \cite{MPMM} (also called the 
\emph{Euler-Poincar{\'e} equation} \cite{HMR})
\begin{equation}\label{eq:EA}
\dot{\mu}-[\mu,\varOmega]=0.
\end{equation}
When $G=\mathrm{SO}(3)$ equipped with Killing metric, 
$\varOmega_{\mathsf{c}}:=\gamma(t)^{-1}\dot{\gamma}(t)$ and 
$\varOmega_{\mathsf{s}}:=\dot{\gamma}(t)\gamma(t)^{-1}$ are 
\emph{anglar velocity in the body} (\emph{corpus}) and 
\emph{angular velocity in the space}, respectively. 
The terminologies are due to Arnold and are justified by the fact that
the motion of a rigid body in Euclidean $3$-space under inertia is a geodesic 
in $\mathrm{SO}(3)$ equipped with Killing metric.

\subsection{Static magnetism}
Let $(M,g)$ be a Riemannian manifold. 
A (static) \emph{magnetic field} is a closed 
$2$-form $F$ on $M$. The \emph{Lorentz force} $\varphi$ derived from 
$F$ is an endomorphim field defined by
\[
F(X,Y)=g(\varphi X,Y),
\quad X,Y\in\varGamma(TM).
\]
A curve $\gamma(s)$ is said to be a 
\emph{magnetic trajectory} under the influence 
of $F$ if it obeys the 
\emph{Lorentz equation}
\[
\nabla_{\dot{\gamma}}\dot{\gamma}=q\varphi\dot{\gamma},
\] 
where $\nabla$ is the Levi-Civita connection of $(M,g)$, 
$\dot{\gamma}$ is the velocity of $\gamma(s)$ and 
$q$ is a constant called the \emph{charge}.

The Lorentz equation implies that 
magnetic trajectories are of constant speed.
Note that when $F=0$ or $q=0$, magnetic trajectories 
reduce to geodesics. For more information on 
magnetic trajectories, we refer to \cite{IM2}.

\section{The special linear group} 
\subsection{Iwasawa decomposition}
Let 
$\mathrm{SL}_{2}\mathbb{R}$ be
the real special linear group of degree $2$:
\[
\mathrm{SL}_{2}\mathbb{R}=\left \{
\left (
\begin{array}{cc}
a & b \\ c & d
\end{array}
\right )
\ \biggr \vert \
a,b,c,d \in \mathbb{R},
\ ad-bc=1
\right \}.
\]
The Iwasawa decomposition 
$\mathrm{SL}_2\mathbb{R}=NAK$ of $\mathrm{SL}_2\mathbb{R}$; 
\begin{align*}
N&=\left \{
\left (
\begin{array}{cc}
1 & x \\ 0 & 1 
\end{array}
\right )
\
\biggr \vert \ 
x \in \mathbb{R}
\right \}, \qquad
\mbox{(Nilpotent \ part)}
\\
A&=
\left \{
\left (
\begin{array}{cc}
\sqrt{y} & 0 \\ 0 & 1/\sqrt{y}
\end{array}
\right )
\ \biggr
\vert \
y>0
\right \}, \qquad
\mbox{(Abelian \ part)}
\\
K&=
\left \{
\left (
\begin{array}{cc}
\cos \theta & \sin \theta \\
-\sin \theta & \cos \theta 
\end{array}
\right )
\ \biggr \vert 
\ 0\leq \theta < 2\pi
\right \}=\mathrm{SO}(2), \quad
\mbox{(Maximal \ torus)}
\end{align*}
allows to introduce the following
global coordinate system $(x,y,\theta)$
of $\mathrm{SL}_2\mathbb{R}$:
\begin{equation} \label{coord}
(x,y,\theta)\longmapsto
\left (
\begin{array}{cc}
1 & x \\ 0 & 1 
\end{array}
\right )
\left (
\begin{array}{cc}
\sqrt{y} & 0 \\ 0 & 1/\sqrt{y}
\end{array}
\right )
\left (
\begin{array}{cc}
\cos \theta & \sin \theta \\
-\sin \theta & \cos \theta 
\end{array}
\right ).
\end{equation}
The mapping 
\[
\psi:\mathbb{H}^2(-4) \times 
\mathbb{S}^1 \rightarrow \mathrm{SL}_2\mathbb{R};\ \ 
\psi(x,y,\theta):=
\begin{pmatrix} 1 & x \\ 0 & 1 
\end{pmatrix}
\begin{pmatrix}
\sqrt{y} & 0 \\ 0 & 1/\sqrt{y} 
\end{pmatrix} 
\begin{pmatrix}
\cos \theta & \sin \theta \\ 
-\sin \theta & \cos \theta
\end{pmatrix}
\]
is a diffeomorphism onto 
$\mathrm{SL}_2\mathbb{R}$. 
Hereafter, we shall refer $(x,y,\theta)$ as a global
coordinate system of $\mathrm{SL}_2\mathbb{R}$.
Hence $\mathrm{SL}_2\mathbb{R}$ is diffeomorphic to
$\mathbb{R} \times \mathbb{R}^{+}\times\mathbb{S}^1$
and hence diffeomorphic to $\mathbb{R}^{3}\setminus \mathbb{R}$.
Since $\mathbb{R} \times \mathbb{R}^{+}$ is diffeomorphic
to open unit disk $\mathbb{D}$, then $\mathrm{SL}_2\mathbb{R}$ is diffeomorphic
to open solid torus $\mathbb{D} \times \mathbb{S}^1$.

The Iwasawa decomposition of 
$\mathrm{SL}_2\mathbb{R}$ can be carried out 
explicitly:

\begin{proposition}\label{Iwasawa}
The Iwasawa decomposition of an 
element $p=\left(\begin{array}{cc}
p_{11}&p_{12}\\p_{21}&p_{22}
\end{array}\right)
\in\mathrm{SL}_{2}\mathbb{R}$ is given 
explicitly by $p=n(p)a(p)k(p)$, where 
\[
n(p)=\left(
\begin{array}{cc}
1 & x\\
0 &1
\end{array}
\right), \ \ 
a(p)=\left(
\begin{array}{cc}
\sqrt{y} & 0\\
0 &1/\sqrt{y}
\end{array}
\right), \ \ 
k(p)=\left(
\begin{array}{cc}
\cos\theta & \sin\theta\\
-\sin\theta &\cos\theta
\end{array}
\right)
\]
with 
\[
x=\frac{p_{11}p_{21}+p_{12}p_{22}}
{(p_{21})^2+(p_{22})^2},
\quad 
y=\frac{1}{(p_{21})^2+(p_{22})^2},
\quad 
e^{i\theta}=\frac{p_{22}-ip_{21}}{\sqrt{(p_{21})^2+(p_{22})^2}}.
\]
\end{proposition}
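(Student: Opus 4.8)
The plan is to prove the proposition by direct computation: I would multiply out the product $n(p)a(p)k(p)$ in the stated parametrized form, read off its four matrix entries as functions of $(x,y,\theta)$, and then invert these relations to solve for $x$, $y$ and $\theta$ in terms of the entries $p_{ij}$. Since each of the three factors has determinant $1$, the product automatically lies in $\mathrm{SL}_2\mathbb{R}$; the substance is to show that for a prescribed $p$ the three parameters can be uniquely recovered, and the explicit formulas in the statement are precisely this inversion.

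First I would record the product. Multiplying $n(p)a(p)$ (an upper triangular matrix with diagonal entries $\sqrt{y}$, $1/\sqrt{y}$ and upper-right entry $x/\sqrt{y}$) on the right by $k(p)$ yields
\begin{align*}
p_{11}&=\sqrt{y}\cos\theta-\tfrac{x}{\sqrt{y}}\sin\theta, & p_{12}&=\sqrt{y}\sin\theta+\tfrac{x}{\sqrt{y}}\cos\theta,\\
p_{21}&=-\tfrac{1}{\sqrt{y}}\sin\theta, & p_{22}&=\tfrac{1}{\sqrt{y}}\cos\theta.
\end{align*}
The second row is the key. Squaring and adding the last two relations gives $(p_{21})^2+(p_{22})^2=1/y$, which both yields the stated formula for $y$ and shows $(p_{21})^2+(p_{22})^2>0$ (automatic, since $\det p=1$ forbids a vanishing second row). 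Substituting $\sqrt{y}=1/\sqrt{(p_{21})^2+(p_{22})^2}$ back gives $\cos\theta=\sqrt{y}\,p_{22}$ and $\sin\theta=-\sqrt{y}\,p_{21}$, that is, the claimed expression for $e^{i\theta}=\cos\theta+i\sin\theta$; since the right-hand side is a unit complex number this determines $\theta$ uniquely in $[0,2\pi)$.

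To extract $x$ I would avoid solving the first row directly and instead form the combination $p_{11}p_{21}+p_{12}p_{22}$. The terms without $x$ cancel in pairs, while the $x$-terms combine through $\sin^2\theta+\cos^2\theta=1$, leaving $p_{11}p_{21}+p_{12}p_{22}=x/y$; multiplying by $y$ produces the stated formula for $x$.

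The only point requiring genuine care --- and the real content rather than bookkeeping --- is the consistency of the first row. Having fixed all three parameters from the second row and one combination, I must check that the recovered $(x,y,\theta)$ also reproduce $p_{11}$ and $p_{12}$. Back-substituting the expressions for $x$, $\cos\theta$, $\sin\theta$ into the first relation and clearing the common denominator $(p_{21})^2+(p_{22})^2$ reduces the required identity to $p_{11}p_{22}-p_{12}p_{21}=1$, and the analogous reduction handles $p_{12}$. Thus the apparently overdetermined system of four entry-equations in three unknowns is consistent precisely because $\det p=1$. This simultaneously establishes existence (the formulas yield a genuine decomposition) and uniqueness (each parameter is read off a determined formula), completing the proof.
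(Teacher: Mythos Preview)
Your proof is correct and is the natural direct verification. The paper itself states this proposition without proof, so there is no approach to compare against; your argument---reading off the second row to obtain $y$ and $\theta$, then using $p_{11}p_{21}+p_{12}p_{22}=x/y$ for $x$, and finally invoking $\det p=1$ for consistency of the first row---is exactly the expected elementary computation.
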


\subsection{The standard Riemannian metric}
The Lie algebra $\mathfrak{sl}_2\mathbb{R}$ 
of $\mathrm{SL}_2\mathbb{R}$ is given explicitly by
\[
\mathfrak{sl}_2\mathbb{R}
=\left\{
X \in \mathrm{M}_{2}\mathbb{R}\ \biggr \vert
\ \mathrm{tr}\> X=0 \right \}. 
\]
We take the following basis of $\mathfrak{sl}_2\mathbb{R}$:
\[
E=\begin{pmatrix} 0 & 1 \\ 0 & 0 \end{pmatrix},\quad
F=\begin{pmatrix} 0 & 0 \\ 1 & 0 \end{pmatrix},\quad
H=\begin{pmatrix} 1 & 0 \\ 0 & -1 \end{pmatrix}.
\]
This basis satisfies the commutation relations:
\[
[E,F]=H,\quad 
[F,H]=2F,\quad 
[H,E]=2E.
\]
The Lie algebra $\mathfrak{n}$, 
$\mathfrak{a}$ and $\mathfrak{k}$ 
of the closed subgroups $N$, $A$ and $K$ are given by 
\[
\mathfrak{n}=\mathbb{R}E,\quad
\mathfrak{a}=\mathbb{R}H,\quad
\mathfrak{k}=\mathbb{R}(E-F).
\]
The Lie algebra $\mathfrak{h}$ is the 
Cartan subalgebra of $\mathfrak{sl}_{2}\mathbb{R}$. 
Moreover $\mathfrak{n}$ and $\mathbb{R}F$ are 
root spaces with respect to $\mathfrak{h}$.
The decomposition 
$\mathfrak{sl}_{2}\mathbb{R}=\mathfrak{h}
\oplus \mathfrak{n}\oplus\mathbb{R}F$ is the root 
space decomposition (or Gauss decompostion) of 
$\mathfrak{sl}_{2}\mathbb{R}$.

Hereafter we use the left invariant 
frame field $\{e_1=E-F,e_2=E+F, e_3=H\}$. 
These left invariant vector fields are 
given explicitly by
\begin{align*}
e_1=&\frac{\partial}{\partial \theta},\\
e_2=&\cos(2\theta)
\left(
2y\frac{\partial}{\partial x}
-\frac{\partial}{\partial \theta}
\right)
+\sin(2\theta)\left(
2y\frac{\partial}{\partial y}
\right),
\\
e_3=&-
\sin(2\theta)
\left(
2y\frac{\partial}{\partial x}
-\frac{\partial}{\partial \theta}
\right)+\cos(2\theta)\left(
2y\frac{\partial}{\partial y}
\right).
\end{align*}
Define an inner product 
$\langle \cdot, \cdot\rangle$
so that 
$\{e_1,e_2,e_3\}$
is orthonormal with respect to 
$\langle \cdot,\cdot \rangle$.
By left-translating this inner product,
we equip a left invariant Riemannian metric
\[
g=
\frac{dx^2+dy^2}{4y^2}+
\left(d\theta+\frac{dx}{2y}
\right)^2.
\]
The one form 
\[
\eta=d\theta+\frac{dx}{2y}
\]
is a globally defined 
contact form on $\mathrm{SL}_2\mathbb{R}$. 
The Reeb vector field $\xi$ of the 
contact form $\eta$ is $\xi=e_1$.

The universal covering 
space of $(\mathrm{SL}_2\mathbb{R},g)$ 
is one of the model space of Thurston geometry 
\cite{Thurston}. 

\subsection{The hyperbolic Hopf fibering}
The special linear group $\mathrm{SL}_2\mathbb{R}$ 
acts transitively and
isometrically on the upper half plane:
\[
\mathbb{H}^2(-4)=
\left(
\{(x,y)\in \mathbb{R}^2 \ | \ y>0\},
\frac{dx^2+dy^2}{4y^2}
\right)
\]
of constant curvature $-4$ by the 
linear fractional transformation as
\[
\left(
\begin{array}{cc}
a & b\\
c & d
\end{array}
\right)\cdot z=\frac{az+b}{cz+d}.
\]
Here we regard a point $(x,y)\in\mathbb{H}^2(-4)$ as a complex 
number $z=x+yi$.

%\begin{remark}
%{\rm A linear fractional transformation 
%determined by a matrix 
%$\displaystyle \left(
%\begin{array}{cc}
%a & b\\
%c & d
%\end{array}
%\right)\not=\pm~\mathrm{Id}$ with $ad-bc=1$ is said to be 
%\begin{itemize}
%\item \emph{elliptic} if $|a+d|<2$;
%\item \emph{parabolic} if $|a+d|=2$;
%\item \emph{hyperbolic} if $|a+d|>2$.
%\end{itemize}
%}
%\end{remark}

The isotropy subgroup of $\mathrm{SL}_2\mathbb{R}$ at $i=(0,1)$ is 
the rotation group $\mathrm{SO}(2)$.
The natural projection $\pi:(\mathrm{SL}_2\mathbb{R},g)
\to \mathrm{SL}_2\mathbb{R}/\mathrm{SO}(2)=\mathbb{H}^2(-4)$
is given explicitly by
\[
\pi(x,y,\theta)=(x,y) \in \mathbb{H}^2(-4)
\]
in terms of the global coordinate
system \eqref{coord}.

The tangent space $T_{i}\mathbb{H}^{2}(-4)$ at 
the origin $i=(0,1)$ is identified with the 
vector subspace $\mathfrak{m}$ defined by
\[
\mathfrak{m}=\{X\in\mathfrak{sl}_{2}\mathbb{R}\>|\>
{}^t\!X=X\}.
\]
The Lie algebra $\mathfrak{g}=\mathfrak{sl}_2\mathbb{R}$ has 
the orthogonal splitting $\mathfrak{g}=\mathfrak{k}
\oplus\mathfrak{m}$.
This splitting can be carried out explicitly as
\[
X=X_{\mathfrak k}+X_{\mathfrak m}, \ \ 
X_{\mathfrak k}=\frac{1}{2}(X-{}^t\!X),
\ \ 
X_{\mathfrak m}=\frac{1}{2}(X+{}^t\!X).
\]
It is easy to see that the projection 
$\pi:(\mathrm{SL}_2\mathbb{R},g)
\rightarrow \mathrm{SL}_2\mathbb{R}/\mathrm{SO}(2)
=\mathbb{H}^2(-4)$ is a Riemannian submersion 
with totally geodesic fibres. 
This submersion 
$\pi:(\mathrm{SL}_2\mathbb{R},g) \to \mathbb{H}^2(-4)$ 
is called the
\emph{hyperbolic Hopf fibering} of $\mathbb{H}^2(-4)$. 

Under the identification 
$\mathfrak{k}\cong\mathbb{R}$, 
the contact form $\eta$ is regarded as a 
connection form of the principal 
circle bundle $\mathrm{SL}_2\mathbb{R}
\to\mathbb{H}^2(-4)$.

\subsection{The naturally reductive structure}
On the Lie algebra 
$\mathfrak{g}=\mathfrak{sl}_2\mathbb{R}$, the inner product 
$\langle\cdot,\cdot\rangle$ at the identity 
induced from 
$g$ is written as
\[
\langle X,Y \rangle=
\frac{1}{2} \ \mathrm{tr}\>
({}^{t}\!XY ),\ \ 
X,Y \in \mathfrak{sl}_{2}\mathbb{R}.
\]
One can see that the metric $g$
is not only invariant by 
$\mathrm{SL}_2\mathbb{R}$-left
translation
but also right translations by 
$\mathrm{SO}(2)$.
Hence the Lie group $\mathcal{G}=\mathrm{SL}_2\mathbb{R}
\times\mathrm{SO}(2)$ 
with multiplication:
\[
(a,b)(a^{\prime},b^{\prime})=(aa^{\prime},bb^{\prime})
\]
acts isometrically on $\mathrm{SL}_2\mathbb{R}$ via the 
action:
\[
(\mathrm{SL}_2\mathbb{R}
\times\mathrm{SO}(2))\times\mathrm{SL}_2\mathbb{R}
\rightarrow \mathrm{SL}_2\mathbb{R};\ \ \ 
(a,b) \cdot X=aXb^{-1}.
\] 
Furthermore, this action of $\mathrm{SL}_2\mathbb{R}
\times\mathrm{SO}(2)$ on $\mathrm{SL}_2\mathbb{R}$ is transitive,
hence $\mathrm{SL}_2\mathbb{R}$ is a homogeneous Riemannian space
of $\mathrm{SL}_2\mathbb{R} \times\mathrm{SO}(2)$. 
The isotropy subgroup $\mathcal{H}$ of 
$\mathrm{SL}_2\mathbb{R}
\times\mathrm{SO}(2)$ at the identity matrix  $\mathrm{Id}$ is 
the diagonal subgroup
\[
\Delta K=\{(k,k)\>\vert\> k\in K\}\cong K
\]
of $K\times K$. 
The coset space $(\mathrm{SL}_2\mathbb{R}
\times\mathrm{SO}(2))/\mathrm{SO}(2)$
is a reductive homogeneous
space. The Lie algebra $\mathfrak{G}$ of the product group 
$\mathcal{G}=G\times K$ is $\mathfrak{G}=\mathfrak{g}\oplus\mathfrak{k}$.
On the other hand the Lie algebra $\mathfrak{H}$ of $\mathcal{H}=\Delta K$ is
\[
\Delta\mathfrak{k}=\{(W,W)\>\vert\>W\in\mathfrak{k}\}\cong 
\mathfrak{k}.
\]
The tangent space $T_{\mathrm{Id}}\mathrm{SL}_2\mathbb{R}$ 
of 
$\mathcal{G}/\mathcal{H}=
(\mathrm{SL}_2\mathbb{R}\times\mathrm{SO}(2))/
\Delta K$ is the 
Lie algebra $\mathfrak{g}=\mathfrak{sl}_2\mathbb{R}$. This tangent space is 
identified with the vector subspace $\mathfrak{p}$ 
of $\mathfrak{G}=\mathfrak{g}\times\mathfrak{k}$ defined by
\[
\mathfrak{p}=\{(V+W,2W)\>\vert\>V\in\mathfrak{m},
\>
W\in\mathfrak{k}\}.
\]
The Lie algebra $\mathfrak{g}\oplus
\mathfrak{k}$ is 
decomposed as
$\mathfrak{g}\oplus\mathfrak{k}=\Delta\mathfrak{k}
\oplus\mathfrak{p}$. One can see that 
this decomposition is reductive. Every $(X,Y)\in\mathfrak{g}\oplus
\mathfrak{k}$ is decomposed as
\[
(X,Y)=(2X_{\mathfrak k}-Y,2X_{\mathfrak k}-Y)
+(X_{\mathfrak m}+(Y-X_{\mathfrak k}),
2(Y-X_{\mathfrak k})).
\]
One can see that the linear subspace $\mathfrak{p}$ satisfies 
\eqref{eq:NatRed}. Thus $(\mathrm{SL}_{2}\mathbb{R}\times\mathrm{SO}(2))/\mathrm{SO}(2)$
is naturally reductive with respect to the 
decomposition $\mathfrak{g}\oplus\mathfrak{k}=\Delta\mathfrak{k}
\oplus\mathfrak{p}$. 
It is known as the only naturally reductive 
space representation of $\mathrm{SL}_2\mathbb{R}$ up to 
isomorphism (see \cite{HI06}).

\subsection{Curvatures}
The commutation relations of $\{e_1,e_2,e_3\}$ are 
\[
[e_1,e_2]=2e_3,\quad [e_2,e_3]=-2e_1,\quad  [e_3,e_1]=2e_2.
\]
The Levi-Civita connection $\nabla$ of
is given by
\[
\begin{array}{ccc}
\nabla_{e_1}e_{1}=0, & \nabla_{e_1}e_{2}=3e_{3}, & \nabla_{e_1}e_{3}=-3e_{2}\\
\nabla_{e_2}e_{1}=e_{3}, & \nabla_{e_2}e_{2}=0, & \nabla_{e_2}e_{3}=-e_{1}\\
\nabla_{e_3}e_{1}=-e_2, & \nabla_{e_3}e_{2}=e_{1} & \nabla_{e_3}e_{3}=0.
\end{array}
\]
The Riemannian curvature $R$ defined by
\[
R(X,Y):=\nabla_{X}\nabla_{Y}-
\nabla_{Y}\nabla_{X}-
\nabla_{[X,Y]}
\]
is given by
\[
R(e_1,e_2)e_1=-e_{2},\quad 
R(e_1,e_2)e_2=e_{1},
\]
\[
R(e_2,e_3)e_2=7e_{3},\quad 
R(e_2,e_3)e_3=-7e_{2},
\]
\[
R(e_1,e_3)e_1=-e_{3},\quad 
R(e_1,e_3)e_3=e_{1}. 
\]
The basis $\{e_1,e_2,e_3\}$ diagonalizes the Ricci tensor.
The principal Ricci curvatures are given by
\[
\rho_{1}=2,\quad 
\rho_{2}=\rho_{3}=-6.
\]
The bi-invariance obstruction $\mathsf{U}$ 
defined by
\[
2\langle 
\mathsf{U}(X,Y),Z\rangle
=-\langle X,[Y,Z]\rangle
+\langle Y,[Z,X]\rangle,
\quad 
X,Y,Z\in\mathfrak{sl}_{2}\mathbb{R}
\]
is given by
\begin{equation}\label{U-tensor}
\mathsf{U}(e_1,e_2)=2e_3,\quad
\mathsf{U}(e_1,e_3)=-2e_2.
\end{equation}
All the other components are zero.

From these we obtain
\[
\mathsf{U}(X,Y)=[X_{\mathfrak k},Y_{\mathfrak m}]+[Y_{\mathfrak k},X_{\mathfrak m}],
\quad  X,Y 
\in \mathfrak{g}.
\]
The Levi-Civita connection is 
rewritten as 
\begin{equation}\label{LC}
\nabla_{X}Y=\frac{1}{2}[X,Y]+{\mathsf{U}}(X,Y)
=\frac{1}{2}[X,Y]+[X_{\mathfrak k},Y_{\mathfrak m}]+[Y_{\mathfrak k},X_{\mathfrak m}],
\quad   X,Y 
\in \mathfrak{g}.
\end{equation}

\subsection{The contact magnetic field}
The contact form $\eta$ gives a left invariant magnetic 
field $F=d\eta$ called the \emph{contact magnetic field}
(see \cite{CFG,I2004,IM1,IM2}). 
The Lorentz force $\varphi$ of $F$
is given by
\[
\varphi e_1=0,\quad \varphi e_2=e_3, \quad  \varphi e_3=-e_2.
\]
Then the quartet $(\varphi,\xi,\eta,g)$ satisfies the 
following relations:
\[
\varphi^2=-I+\eta \otimes \xi,
\quad \nabla_{X}\xi=\varphi{X},
\]
\[
g(\varphi{X},\varphi{Y})=g(X,Y)-\eta(X)\eta(Y),
\]
\[
(\nabla_{X}\varphi)Y=-g(X,Y)\xi+\eta(Y)X,
\]
for all $X$, $Y \in \varGamma(T\mathrm{SL}_2\mathbb{R})$. 
These formulas imply that the
structure $(\varphi,\xi,\eta)$ is a 
left invariant Sasakian structure of $\mathrm{SL}_2\mathbb{R}$.
The structure $(\varphi,\xi,\eta)$ is called 
the \emph{canonical Sasakian structure} of 
$\mathrm{SL}_2\mathbb{R}$. The Riemannian curvature tensor $R$ of the metric $g$ is 
described by the
following formulas:
\begin{align*}
R(X,Y)Z  = & -
g(Y,Z)X+g(Z,X)Y \\
&  -2\>\{
\eta(Z)\eta(X)Y
-\eta(Y)\eta(Z)X \\
&+g(Z,X)\eta(Y)\xi
-g(Y,Z)\eta(X)\xi \\
	&-g(Y,\varphi{Z})\varphi{X}-g(Z,\varphi{X})\varphi{Y}+
2g(X,\varphi{Y})\varphi{Z}
\>\}
\end{align*}
in terms of the canonical Sasakian structure.

\section{The Euler-Arnold equation of $\mathrm{SL}_2\mathbb{R}$}
\subsection{}
In this section we deduce the 
Euler-Arnold equation for $\mathrm{SL}_2\mathbb{R}$. 
First of all we recall the notion of contact angle. 
Let $\gamma(s)$ be an arc length parametrized curve, then 
its \emph{contact angle} $\sigma(s)$ is the angle function 
between Reeb vector field and $\dot{\gamma}(s)$, 
\textit{i.e.}, $\cos\sigma(s)=g(\dot{\gamma}(s),\xi)$. 
An arc length parametrized cure is said to be a 
\emph{slant curve} if its contact angle is constant. 
In particular, $\gamma(s)$ is said to be a 
\emph{Reeb flow} if $\sin\sigma(s)=0$ along $\gamma(s)$. 
An arc length parametrized curve is called a \emph{Legendre curve} if $\cos\sigma(s)=0$.
One can see that every geodesic is a slant curve.
\subsection{}
The Killing form 
of $\mathrm{SL}_2\mathbb{R}$ is given by
\[
4\mathrm{tr}(XY),\quad X,Y\in\mathfrak{sl}_2\mathbb{R}.
\]
In this article we use the normalized Killing metric 
defined by
\[
\mathsf{B}(X,Y)=\frac{1}{2}\mathrm{tr}(XY),\quad X,Y\in\mathfrak{sl}_2\mathbb{R}.
\] 
As is well known the normalized Killing metric is a bi-invariant 
Lorentzian metric of constant curvature $-1$. Thus 
$(\mathrm{SL}_2\mathbb{R},\mathsf{B})$ is identified 
with the anti de Sitter $3$-space $\mathbb{H}^3_1$. 
Here we explain the reason why we choose the normalization as above. 
The naturally reductive Riemannian metric $g$ is directly connected to 
the anti de Sitter metric. Indeed, the anti de Sitter metric 
is given by 
\[
g-2\eta\otimes\eta=
\frac{dx^2+dy^2}{4y^2}-
\left(d\theta+\frac{dx}{2y}
\right)^2.
\]
Moreover the 
endomorphism field $\mathcal{I}$ is given by a simple form
\[
\mathcal{I}(X)={}^t\!X,\quad X\in\mathfrak{g}.
\]
Hence the Euler-Arnold equation is rewritten as 
\begin{equation}\label{eq:EA-SL2}
\dot{\mu}=[\mu,{}^t\!\mu].
\end{equation}
\subsection{}

Let us take an arc length parametrized curve 
$\gamma(s)=(x(s),y(s),\theta(s))$ in 
$G=\mathrm{SL}_2\mathbb{R}$. Then the anglar velocity 
$\varOmega$ is computed as
\[
\varOmega=Ae_1+Be_2+Ce_3,
\]
where 
\[
A=\dot{\theta}+\frac{\dot{x}}{2y}=\eta(\varOmega)=\cos\sigma,
\quad
B= \frac{\dot{x}}{2y}\cos(2\theta)
+\frac{\dot{y}}{2y}\sin(2\theta),
\quad 
C=-\frac{\dot{x}}{2y}\sin(2\theta)+
\frac{\dot{y}}{2y}\cos(2\theta).
\]
Since $\mathcal{I}$ is the transpose operation, we have
 \[
\mathcal{I}(e_1)=-e_1,
 \quad 
\mathcal{I}(e_2)=e_2,
 \quad 
\mathcal{I}(e_3)=e_3.
\]
Hence the momentum is 
 given by
 \[
\mu=-Ae_1+Be_2+Ce_3. 
 \]
Hence 
$[\mu,{}^t\!\mu]=[\mu,\varOmega]$ is computed as 
\[
[\mu,{}^t\!\mu]=
4A(Ce_2-Be_3).
\]
Hence the Euler-Arnold equation is 
\[
\dot{A}=0,
\quad 
\dot{B}=4AC,
\quad 
\dot{C}=-4AB.
\]
The first equation means the constancy of the contact angle.

Hence the Euler-Arnold equation reduces to the matrix-valued ODE:
\[
\frac{d}{ds}
\left(
\begin{array}{c}
B
\\
C
\end{array}
\right)
=-4\cos\sigma
\left(
\begin{array}{cc}
0 & -1\\
1 & 0
\end{array}
\right)
\left(
\begin{array}{c}
B
\\
C
\end{array}
\right).
\]
Thus the coefficient functions $B(s)$ and $C(s)$ are given 
explicitly by
\begin{align}\label{eq:BC}
\left(
\begin{array}{c}
B(s)
\\
C(s)
\end{array}
\right)
&=\exp_{K}(4s\cos\sigma\,e_1)
\left(
\begin{array}{c}
B(0)
\\
C(0)
\end{array}
\right)
\\
&=
\left(
\begin{array}{cc}
\cos(4s\cos\sigma) & \sin(4s\cos\sigma)\\
-\sin(4s\cos\sigma) & \cos(4s\cos\sigma)
\end{array}
\right)
\left(
\begin{array}{c}
B(0)
\\
C(0)
\end{array}
\right).
\nonumber
\end{align}
Split $\varOmega$ as 
$\varOmega=\varOmega_{\mathfrak k}+\varOmega_{\mathfrak m}$ along 
$\mathfrak{g}=\mathfrak{k}\oplus\mathfrak{m}$, we notice 
that $\varOmega_{\mathfrak k}=Ae_1=A\xi$,
\[
\exp_{K}(4s\cos\sigma\,e_1)=\exp_{K}(4s\varOmega_{\mathfrak k}).
\]
Let us demand the initial condition
\[
\gamma(0)=(x(0),y(0),\theta(0))=(0,1,0)
\]
and 
\[
\varOmega(0)=A(0)e_1+B(0)e_2+C(0)e_{3}=\cos\sigma\,e_1+
B_{0}e_2+C_0e_{3},
\]
where
\[
B_0=\frac{\dot{x}(0)}{2},
\quad 
C_0=\frac{\dot{y}(0)}{2},
\quad 
\cos\sigma=\dot{\theta}(0)+B_0.
\]
\begin{proposition}\label{prop:3.1}
Let $\gamma(s)$ be a geodesic of $\mathrm{SL}_2\mathbb{R}$ starting at 
the identity $\mathrm{Id}$ whose initial angular velocity
is $\varOmega(0)=\cos\sigma e_1+B_0e_2+C_0e_3$. Then 
the angular velocity $\varOmega(s)$ of $\gamma(s)$ is given by
\[
\varOmega(s)=\cos\sigma \,e_1+B(s)e_2+C(s)e_3,
\]
where $\sigma$ is a constant and 
\begin{equation}\label{eq:3.3}
\left\{
\begin{array}{l}
B(s)=B_0\,\cos(4s\cos\sigma)+C_0\sin(4s\cos\sigma),
\\
C(s)=C_0\,\cos(4s\cos\sigma)-B_0\sin(4s\cos\sigma).
\end{array}
\right.
\end{equation}
\end{proposition}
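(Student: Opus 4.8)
The plan is to read Proposition 3.1 as the explicit integration of the Euler--Arnold equation \eqref{eq:EA-SL2} for a geodesic issuing from the identity, so the entire argument rests on the Euler--Arnold characterization of geodesics, namely that $\gamma$ is a geodesic precisely when $\dot\mu=[\mu,{}^t\!\mu]$. First I would fix the left-invariant frame $\{e_1,e_2,e_3\}$ and write the angular velocity as $\varOmega(s)=A(s)e_1+B(s)e_2+C(s)e_3$. Since the moment of inertia tensor $\mathcal{I}$ is the transpose operation, with $\mathcal{I}e_1=-e_1$, $\mathcal{I}e_2=e_2$, $\mathcal{I}e_3=e_3$, the momentum becomes $\mu=\mathcal{I}\varOmega=-Ae_1+Be_2+Ce_3$; converting $\varOmega$ into $\mu$ in coordinates is the first concrete step.

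Next I would compute the bracket $[\mu,{}^t\!\mu]=[\mu,\varOmega]$ directly from the commutation relations $[e_1,e_2]=2e_3$, $[e_2,e_3]=-2e_1$, $[e_3,e_1]=2e_2$. Expanding bilinearly, the $e_1$-contributions cancel and one is left with $[\mu,\varOmega]=4A(Ce_2-Be_3)$. Matching this against $\dot\mu=-\dot A e_1+\dot B e_2+\dot C e_3$ componentwise yields the decoupled system $\dot A=0$, $\dot B=4AC$, $\dot C=-4AB$. The first equation is the crucial structural observation: it forces $A\equiv A(0)=\cos\sigma$, i.e.\ the contact angle $\sigma$ is constant along a geodesic, which is exactly the content of the $e_1$-component of the statement.

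With $A=\cos\sigma$ now a genuine constant, the remaining two equations form a linear system with constant coefficients,
\[
\frac{d}{ds}\begin{pmatrix}B\\C\end{pmatrix}
=4\cos\sigma\begin{pmatrix}0&1\\-1&0\end{pmatrix}\begin{pmatrix}B\\C\end{pmatrix},
\]
whose solution is the matrix exponential recorded in \eqref{eq:BC}. Since $\exp\bigl(t\,\bigl(\begin{smallmatrix}0&1\\-1&0\end{smallmatrix}\bigr)\bigr)$ is the clockwise rotation through angle $t$, applying it to the initial data $(B_0,C_0)$ gives precisely $B(s)=B_0\cos(4s\cos\sigma)+C_0\sin(4s\cos\sigma)$ and $C(s)=C_0\cos(4s\cos\sigma)-B_0\sin(4s\cos\sigma)$, which is \eqref{eq:3.3} and completes the proof.

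As for the main obstacle, essentially all of the difficulty is front-loaded into the bracket computation of the second step, where the sign bookkeeping in the structure constants must be handled carefully; once the system $\dot A=0,\ \dot B=4AC,\ \dot C=-4AB$ is in hand the integration is routine. In fact, because the preceding subsection already carries out these computations, the proposition is best viewed as a clean repackaging of that work, and the proof reduces to invoking the constancy $A=\cos\sigma$ together with the explicit solution \eqref{eq:BC}.
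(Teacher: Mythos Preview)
Your proposal is correct and follows essentially the same route as the paper: you pass to the momentum $\mu=-Ae_1+Be_2+Ce_3$, compute $[\mu,{}^t\!\mu]=4A(Ce_2-Be_3)$ from the structure constants, read off the system $\dot A=0$, $\dot B=4AC$, $\dot C=-4AB$, and then integrate the resulting constant-coefficient linear system via the rotation exponential to obtain \eqref{eq:3.3}. This is exactly how the paper arrives at Proposition~\ref{prop:3.1}, and your closing remark that the proposition simply repackages the computations of the preceding subsection is accurate.
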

\subsection{}
We wish to solve the 
ODE $\dot{\gamma}(s)=\gamma(s)\varOmega(s)$ where 
$\varOmega(s)$ is the one given in Proposition 
\ref{prop:3.1}. For this purpose, let us consider curves of the form:
\[
\gamma(s)=\exp_{G}(sW)\exp_{K}(-sV),
\quad W\in \mathfrak{sl}_2\mathbb{R},
\>V\in\mathfrak{so}(2).
\]
Here $\exp_G$ and $\exp_K$ are the exponential maps
\[
\exp_{G}:\mathfrak{sl}_{2}\mathbb{R}\to \mathrm{SL}_2\mathbb{R},
\quad 
\exp_{K}:\mathfrak{so}(2)\to \mathrm{SO}(2), 
\quad 
\]
respectively. 
Express $W$ and $V$ as
\[
W=ae_1+be_2+ce_3
=\left(
\begin{array}{cc}
c & a+b\\
-a+b & -c
\end{array}
\right)
,\quad V=ue_1
=\left(
\begin{array}{cc}
0 & u\\
-u & 0
\end{array}
\right).
\]
By using the 
relations
\begin{align*}
\mathrm{Ad}(\exp_{K}(sV))e_1=&e_1,
\\
\mathrm{Ad}(\exp_{K}(sV))e_2=&\cos(2us)\,e_2+
\sin(2us)\,e_3,
\\
\mathrm{Ad}(\exp_{K}(sV))e_2=&-\sin(2us)\,e_2+
\cos(2us)\,e_3,
\end{align*}
the angular velocity 
is computed as
\[
\varOmega(s)=\gamma(s)^{-1}
\dot{\gamma}(s)=
\mathrm{Ad}(\exp_{K}(sV))W-V
=w_1(s)e_1+w_2(s)e_2+w_3(s)e_3,
\]
where
\begin{equation}\label{eq:3.4}
\left\{
\begin{array}{l}
 w_{1}(s)= a-u,
\\
 w_{2}(s)=
b\cos(2us)-c\,\sin(2us),
\\
w_{3}(s)=b\sin(2us)+c\,\cos(2us).
\end{array}
\right.
\end{equation}
Obviously $w_1$ is constant along $\gamma$. 
Thus $\gamma(s)$ is a geodesic 
if and only if 
\begin{align*}
& \dot{w}_2-4w_1w_3=
2(u-2a)\{b\,\cos(2us)+c\sin(2us)\}=0,
\\
&\dot{w}_3+4w_1w_2=
-2(u-2a)\{b\,\cos(2us)-c\sin(2us)\}=0.
\end{align*}
Thus we conclude that 
$\gamma(s)=\exp_{G}(sW)\exp_{K}(-sV)$ is a geodesic 
if and only if $u=2a$. It follows that
\[
\gamma(s)=\exp_{G}
\{s(ae_1+be_2+ce_3)\}\,
\exp_{K}\{s(-2a e_1)\}.
\]
Set 
\[
X=X_{\mathfrak k}+X_{\mathfrak m},
\quad 
X_{\mathfrak k}=-ae_1,
\quad 
X_{\mathfrak m}=be_2+ce_3.
\]
Then 
$\gamma(s)$ is rewritten as
\[
\gamma(s)=\exp_{G}\{s(-X_{\mathfrak{k}}+X_{\mathfrak m})
\}
\exp_{K}\{s(2X_{\mathfrak{k}})\}.
\]
Comparing the systems \eqref{eq:3.3} and \eqref{eq:3.4},
the uniqueness of geodesics implies 
$a=-\cos\sigma$ and $b=B_0$ and $c=C_0$ and  hence the 
initial velocity is 
\[
X=\cos\sigma e_1+B_0e_2+C_0e_3\in\mathfrak{sl}_{2}\mathbb{R}.
\] 
Thus we retrive the following fundamental result.
\begin{theorem}[\cite{DZ,Gordon,HI06}]
\label{thm:DZ}
The geodesic $\gamma_{X}(s)$ starting at the origin 
$\mathrm{Id}$ of $\mathrm{SL}_2\mathbb{R}$ with 
initial velocity $X=X_{\mathfrak k}+X_{\mathfrak m}
\in T_{\mathrm{Id}}\mathrm{SL}_{2}\mathbb{R}=\mathfrak{g}$ is given 
explicitly by
\[
\gamma_{X}(s)
=\exp_{G}\{s(-X_{\mathfrak{k}}+X_{\mathfrak{m}})\}
\exp_{K}\{s(2X_{\mathfrak{k}})\}.
\]
\end{theorem}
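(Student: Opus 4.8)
The plan is to reduce everything to uniqueness of geodesics. A geodesic is determined by its initial point and initial velocity, so it suffices to produce one explicit curve that starts at $\mathrm{Id}$, satisfies the geodesic equation, and has initial velocity $X$; that curve is then forced to be $\gamma_{X}$. Equivalently, working with the angular velocity $\varOmega=\gamma^{-1}\dot{\gamma}$, I will recover $\gamma$ from the $\varOmega(s)$ already computed in Proposition \ref{prop:3.1} by integrating the reconstruction equation $\dot{\gamma}=\gamma\,\varOmega$, $\gamma(0)=\mathrm{Id}$, whose solution is unique. The point of the argument is to guess the right closed form and then certify it.

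For the ansatz I would take the product form $\gamma(s)=\exp_{G}(sW)\exp_{K}(-sV)$ with $W=ae_1+be_2+ce_3\in\mathfrak{sl}_2\mathbb{R}$ and $V=ue_1\in\mathfrak{so}(2)$, since such a product interpolates a one-parameter subgroup of $G$ with a Reeb rotation, exactly the two ingredients the statement predicts; moreover $\gamma(0)=\mathrm{Id}$ automatically. Differentiating and left-translating, the left factor contributes the constant $W$ while the right factor contributes $-V$ after conjugation, so $\varOmega(s)=\mathrm{Ad}(\exp_{K}(sV))\,W-V$. Feeding in the rotation action of $\exp_{K}(sV)$ on the frame—it fixes $e_1$ and rotates the $e_2e_3$-plane at rate $2u$—yields the components \eqref{eq:3.4}: $w_1=a-u$ is constant, and $w_2,w_3$ are the rotated data.

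Next I would impose the Euler-Arnold geodesic system, which in the components \eqref{eq:3.4} reads $\dot{w}_1=0$, $\dot{w}_2=4w_1w_3$, $\dot{w}_3=-4w_1w_2$. The first equation is automatic, and substituting \eqref{eq:3.4} collapses the remaining two to a single algebraic condition of the form $(u-2a)\cdot(\text{a nowhere-identically-zero trigonometric factor})=0$, forcing $u=2a$. Hence every geodesic of this product type is $\gamma(s)=\exp_{G}\{s(ae_1+be_2+ce_3)\}\exp_{K}\{s(-2a\,e_1)\}$. Writing $X_{\mathfrak k}=-ae_1$ and $X_{\mathfrak m}=be_2+ce_3$ turns the left generator into $-X_{\mathfrak k}+X_{\mathfrak m}$ and the right exponent into $2X_{\mathfrak k}$, giving precisely the claimed form. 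Finally, with $u=2a$ the angular velocity at $s=0$ is $\varOmega(0)=-a\,e_1+b\,e_2+c\,e_3$, so comparing the systems \eqref{eq:3.3} and \eqref{eq:3.4} and invoking uniqueness pins down $a=-\cos\sigma$, $b=B_0$, $c=C_0$; the initial velocity is therefore $X=\cos\sigma\,e_1+B_0e_2+C_0e_3=X_{\mathfrak k}+X_{\mathfrak m}$, as required.

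I expect the only genuine subtlety to be the sign bookkeeping on the $\mathfrak k$-component. Because the right $\exp_{K}$-factor is itself moving, the velocity at $s=0$ is $W-V$ rather than $W$, so the $e_1$-coefficient of the initial velocity is $a-u=-a$, opposite in sign to the coefficient $a$ in the left generator $W$. Keeping straight that $X_{\mathfrak k}=-ae_1$ while $W$ has $e_1$-part $ae_1$—and correspondingly that $V=-2X_{\mathfrak k}$, so that $\exp_{K}(-sV)=\exp_{K}(2sX_{\mathfrak k})$—is where errors would most easily creep in. The remaining steps (the $\mathrm{Ad}$-action computation and the ODE verification) are routine, and the appeal to uniqueness of geodesics is standard.
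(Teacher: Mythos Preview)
Your proposal is correct and follows essentially the same route as the paper: the ansatz $\gamma(s)=\exp_{G}(sW)\exp_{K}(-sV)$, the computation $\varOmega(s)=\mathrm{Ad}(\exp_{K}(sV))W-V$ yielding \eqref{eq:3.4}, the reduction of the Euler--Arnold system to $u=2a$, the rewriting via $X_{\mathfrak k}=-ae_1$, $X_{\mathfrak m}=be_2+ce_3$, and the final identification $a=-\cos\sigma$, $b=B_0$, $c=C_0$ by comparison with \eqref{eq:3.3} and uniqueness are all exactly what the paper does. Your sign discussion is accurate; the only slight imprecision is calling the trigonometric factor ``nowhere-identically-zero'' (it vanishes when $b=c=0$, but then the equations hold trivially and the formula with $u=2a$ still gives the correct Reeb geodesic).
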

Here we give homogeneous geometric interpretation for this 
fundamental theorem.

The exponential map 
$\exp_{G\times K}:\mathfrak{sl}_{2}\mathbb{R}\oplus\mathfrak{so}(2)
\to \mathrm{SL}_2\mathbb{R}\times \mathrm{SO}(2)$ is given by
\[
\exp_{G\times K}(X,Y)=(\exp_{G}X, \exp_{K}Y),\quad (X,Y)\in \mathfrak{sl}_{2}\mathbb{R}\oplus\mathfrak{so}(2).
\]
Take a vector $(X,Y)\in\mathfrak{sl}_{2}\mathbb{R}\oplus\mathfrak{so}(2)$, 
then the orbit of $\mathrm{Id}$ under the 
one-parameter subgroup 
$\{ \exp_{G\times K} \{s(X,Y)\} \,\}_{s\in\mathbb{R}
}\subset G\times K$ is given by
\[
\exp_{G\times K}\{s(X,Y)\}\cdot \mathrm{Id}
=\exp_{G}(sX)\cdot\mathrm{Id}\cdot 
\left(\exp_{K}(sY)\,\right)^{-1}
=\exp_{G}(sX)\exp_{K}(-sY).
\]
Since $\mathrm{SL}_2\mathbb{R}
=(\mathrm{SL}_2\mathbb{R}\times\mathrm{SO}(2))/\mathrm{SO}(2)$ 
is naturally reductive, 
every geodesic starting at the origin 
$\mathrm{Id}$ is an orbit of $\mathrm{Id}$ under the 
one-parameter subgroup 
of $G\times K$ with initial velocity 
$X\in T_{\mathrm{Id}}\mathrm{SL}_{2}\mathbb{R}=\mathfrak{g}$. 
Under the identification of 
$T_{\mathrm{Id}}\mathrm{SL}_{2}\mathbb{R}$ with 
$\mathfrak{p}$, the initial velocity 
$X=X_{\mathfrak{k}}+X_{\mathfrak{m}}$ is 
identified with 
$(X_{\mathfrak{m}}-X_{\mathfrak k},-2X_{\mathfrak k})$.
Thus the geodesic 
\[
\gamma_{X}(s)
=\exp_{G}\{s(-X_{\mathfrak{k}}+X_{\mathfrak{m}})\}
\exp_{K}\{s(2X_{\mathfrak{k}})\}.
\] 
is nothing but the orbit $\exp_{G\times K}(sX)\,\mathrm{Id}$.

According to \cite{BVY}, the fact that the modular 
$3$-fold $\mathrm{PSL}_{2}\mathbb{R}/\mathrm{PSL}_{2}\mathbb{Z}$ 
is topologically equivalent to the complement of the 
trefoil $\mathcal{K}$ in the $3$-sphere was first observed by Quillen 
(see Milnor \cite{Milnor}). 
Bolsinov, Veselov and Ye \cite{BVY} proved that
the periodic geodesics on the modular $3$-fold 
$\mathrm{SL}_{2}\mathbb{R}/\mathrm{SL}_{2}\mathbb{Z}$ 
with sufficiently large values of $\mathcal{C}=\kappa^2/16$
represent trefoil cable knots in 
$\mathbb{S}^3\smallsetminus \mathcal{K}$, where $\kappa$ is the 
geodesic curvature of the projected curve in the hyperbolic surface. 
Any trefoil cable knot can be described in this way.

%%%%%%%%%%%%%%%%%%%%%%%

\section{Homogeneous magnetic trajectories in $\mathrm{SL}_{2}\mathbb{R}$}

\subsection{The magnetized Euler-Arnold equation}
Now we magnetize the Euler-Arnold equation. 
Since $\varphi$ is left invariant, 
\[
\varphi\dot{\gamma}=\gamma \varphi \varOmega.
\]
Hence 
\[
\varphi\varOmega
=\mathcal{I}^{-1}\mathcal{I}\varphi\varOmega
=\mathcal{I}^{-1}(\mathcal{I}\varphi\mathcal{I}\mu).
\]
One can confirm that $\mathcal{I}\varphi\mathcal{I}=\varphi$.
Hence the Lorentz equation is 
rewritten as the following 
\emph{magnetized Euler-Arnold equation}:
\begin{equation}\label{eq:EA-mag}\
\dot{\mu}-[\mu,{}^t\!\mu]=q\varphi\mu.
\end{equation}
Express $\mu=-Ae_1+Be_2+Ce_3$ as before, then 
the left hand side of the magnetized Euler-Arnold equation is 
\[-\dot{A}e_1+(\dot{B}-4AC)e_2+
(\dot{C}+4AB)e_3.
\]
On the other hand,
\[
\varphi \mu=-Ce_2+Be_3.
\]
Hence the magnetized Euler-Arnold equation is the system
\begin{align*}
&\dot{A}=0,\\
&\dot{B}=(4A-q)C,
\\
&\dot{C}=-(4A-q)B.
\end{align*}
One can check that this system 
coincides the system 
obtained in \cite[p.~2181]{IM1}. 
The first equation implies the 
constancy of the contact angle $A=\cos\sigma$. 
Under the initial condition $B(0)=B_0$ and 
$C(0)=C_0$, 
we obtain
\begin{align}\label{eq:4BC}
\left(
\begin{array}{c}
B(s)
\\
C(s)
\end{array}
\right)
&=\exp_{K}(s(4\cos\sigma-q)\,e_1)
\left(
\begin{array}{c}
B_0
\\
C_0
\end{array}
\right)
\\
&=
\left(
\begin{array}{cc}
\cos((4\cos\sigma-q)s) & \sin((4\cos\sigma-q)s)\\
-\sin((4\cos\sigma-q)s) & \cos((4\cos\sigma-q)s)
\end{array}
\right)
\left(
\begin{array}{c}
B_0
\\
C_0
\end{array}
\right).
\nonumber
\end{align}
\subsection{}
To solve the ODE system \eqref{eq:4BC}, 
here we determine contact magnetic curves 
of the form
\[
\gamma(s)=\exp_{G}(sW)\exp_{K}(-sV),
\quad (W,V)\in\mathfrak{sl}_2\mathbb{R}\oplus\mathfrak{so}(2).
\]
Note that when $(W,V)\in\mathfrak{p}$, then $\gamma(s)$ is a geodesic. 
Express $W$ and $V$ as
\[
W=ae_1+be_2+ce_3
=\left(
\begin{array}{cc}
c & a+b\\
-a+b & -c
\end{array}
\right)
,\quad V=ue_1
=\left(
\begin{array}{cc}
0 & u\\
-u & 0
\end{array}
\right).
\]
Use the same notations as before, that is 
$\Omega(s)=\gamma(s)^{-1}\dot{\gamma}(s):=
w_1(s)e_1+w_2(s)e_2+w_3(s)e_3.
$
Thus $\gamma(s)$ is a contact magnetic trajectory 
if and only if 
\begin{align*}
& \dot{w}_2-4w_1w_3+qw_3=~ (2u-4a+q)(b\sin(2us)+c\,\cos(2us))=0,
\\
& \dot{w}_3+4w_1w_2-qw_2=-(2u-4a+q)(b\cos(2us)-c\,\sin(2us))=0.
\end{align*}
Thus we conclude that 
$\gamma(s)=\exp_{G}(sW)\exp_{K}(-sV)$ is a contact magnetic trajectory   
when and only when $u=2a-q/2$. Namely
\[
\gamma(s)=\exp_{G}
\{s
(ae_1+be_2+ce_3)\}\,
\exp_{K}
\left\{
-s(2a-\tfrac{q}{2})e_1
\right\}.
\]
Set 
\[
X=X_{\mathfrak k}+X_{\mathfrak m},
\quad 
X_{\mathfrak k}=-ae_1,
\quad 
X_{\mathfrak m}=be_2+ce_3.
\]
Then 
$\gamma(s)$ is rewritten as
\begin{align*}
\gamma(s)=&\exp_{G}\{s(-X_{\mathfrak{k}}+X_{\mathfrak m})
\}
\exp_{K}\{s(2X_{\mathfrak{k}}+\tfrac{q}{2}\xi)\}
\\
=&
\exp_{G}\{s(-X_{\mathfrak{k}}+X_{\mathfrak m})
\}
\exp_{K}\{s(2X_{\mathfrak{k}})\}
\exp_{K}\{s(\tfrac{q}{2}\xi)\}
\\
=&\gamma_{X}(s)\,\exp_{K}\{s(\tfrac{q}{2}\xi)\}.
\end{align*}
We may state the following result.
\begin{theorem}
\label{MagHom_THM}
Every contact magnetic curve $\gamma$ of $\mathrm{SL}_2\mathbb{R}$ is homogeneous.
\end{theorem}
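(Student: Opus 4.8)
The plan is to read homogeneity off the explicit form of $\gamma$ already obtained, rather than to analyze the Lorentz equation afresh. First I would recall the orbit formula used for geodesics: for $(Z_1,Z_2)\in\mathfrak{g}\oplus\mathfrak{k}$,
\[
\exp_{G\times K}\{s(Z_1,Z_2)\}\cdot\mathrm{Id}=\exp_{G}(sZ_1)\,\exp_{K}(-sZ_2),
\]
which is by definition the orbit of $\mathrm{Id}$ under the one-parameter subgroup $s\mapsto\exp_{G\times K}\{s(Z_1,Z_2)\}$ of the isometry group $\mathcal{G}=\mathrm{SL}_2\mathbb{R}\times\mathrm{SO}(2)$, hence a homogeneous curve. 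So the task reduces to exhibiting every contact magnetic trajectory as such an orbit.

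For a trajectory starting at $\mathrm{Id}$ I would invoke the closed form derived just above, namely $\gamma(s)=\exp_{G}\{s(-X_{\mathfrak k}+X_{\mathfrak m})\}\,\exp_{K}\{s(2X_{\mathfrak k}+\tfrac{q}{2}\xi)\}$, and match it against the orbit formula. Setting $Z_1=-X_{\mathfrak k}+X_{\mathfrak m}\in\mathfrak{g}$ and $Z_2=-(2X_{\mathfrak k}+\tfrac{q}{2}\xi)\in\mathfrak{k}$ — both landing in the correct factors since $X_{\mathfrak k},\xi\in\mathfrak{k}$ and $X_{\mathfrak m}\in\mathfrak{m}$ — gives $\gamma(s)=\exp_{G\times K}\{s(Z_1,Z_2)\}\cdot\mathrm{Id}$. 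Thus $\gamma$ is the orbit of $\mathrm{Id}$ under a genuine one-parameter subgroup of $\mathcal{G}$, and therefore homogeneous. This also makes the decomposition announced in the abstract transparent: the pieces $\exp_{G}\{s(-X_{\mathfrak k}+X_{\mathfrak m})\}\exp_{K}\{s(2X_{\mathfrak k})\}$ rebuild the homogeneous geodesic $\gamma_X(s)$ of Theorem \ref{thm:DZ}, while the leftover factor $\exp_{K}\{s(\tfrac{q}{2}\xi)\}$ is the charged Reeb flow. Uniqueness of solutions of the Lorentz equation with prescribed initial velocity $X\in\mathfrak{g}$ guarantees that every such trajectory through $\mathrm{Id}$ arises this way.

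To reach an arbitrary initial point $p\in\mathrm{SL}_2\mathbb{R}$ I would reduce to the previous case by left invariance. Since $g$ and $F=d\eta$ are both $\mathrm{SL}_2\mathbb{R}$-left invariant, the Lorentz equation is preserved by the left translation $L_p$, which is precisely the isometry induced by $(p,\mathrm{Id})\in\mathcal{G}$; hence any trajectory through $p$ equals $L_p$ applied to a trajectory $\gamma$ through $\mathrm{Id}$. Writing $\gamma(s)=\exp_{G\times K}\{s(Z_1,Z_2)\}\cdot\mathrm{Id}$ and conjugating,
\[
L_p\,\gamma(s)=(p,\mathrm{Id})\exp_{G\times K}\{s(Z_1,Z_2)\}\cdot\mathrm{Id}=\exp_{G\times K}\{s\,\mathrm{Ad}(p,\mathrm{Id})(Z_1,Z_2)\}\cdot p,
\]
so the translated curve is the orbit of $p$ under the one-parameter subgroup $s\mapsto\exp_{G\times K}\{s\,\mathrm{Ad}(p,\mathrm{Id})(Z_1,Z_2)\}$ of $\mathcal{G}$, and is again homogeneous.

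I expect essentially no remaining computation: the substantive work — integrating the magnetized Euler--Arnold equation into a closed form for $\gamma(s)$ — has already been done. The only points demanding care are bookkeeping: verifying that $Z_1$ and $Z_2$ genuinely lie in $\mathfrak{g}$ and $\mathfrak{k}$ so that $(Z_1,Z_2)$ is a legitimate element of the Lie algebra of $\mathcal{G}$, and that the conjugation in the last display correctly moves the base point from $\mathrm{Id}$ to $p$ while keeping the subgroup one-parameter. The extension to arbitrary base points, rather than the orbit recognition itself, is thus the subtlest step.
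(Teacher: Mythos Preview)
Your proposal is correct and follows essentially the same route as the paper: the paper derives the closed form $\gamma(s)=\exp_{G}(sW)\exp_{K}(-sV)$ with $u=2a-\tfrac{q}{2}$, which is precisely the orbit $\exp_{G\times K}\{s(W,V)\}\cdot\mathrm{Id}$, and then states homogeneity immediately. You are more explicit than the paper in spelling out the uniqueness argument and the reduction from an arbitrary base point $p$ to $\mathrm{Id}$ via left translation and conjugation, but these are natural elaborations of the same argument rather than a different approach.
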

More precisely, we may explicitly describe contact magnetic curves in $\mathrm{SL}_2\mathbb{R}$.
\begin{theorem}
\label{MagTrajProd_THM}
Every contact magnetic curve $\gamma$ starting at the origin 
$\mathrm{Id}$ of $\mathrm{SL}_2\mathbb{R}$ with 
initial velocity $X\in T_{\mathrm{Id}}\mathrm{SL}_{2}\mathbb{R}=\mathfrak{g}$ 
and with charge $q$ is the product of the
homogeneous geodesic $\gamma_{X}(s)$ and the
\emph{charged Reeb flow} $\exp_{K}\{s(\tfrac{q}{2}\xi)\}$, namely
\[
\gamma(s)=\gamma_{X}(s)\,\exp_{K}\{s(\tfrac{q}{2}\xi)\}.
\]
\end{theorem}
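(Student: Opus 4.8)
The plan is to integrate the Lorentz equation explicitly and then recognize the resulting curve as the asserted product. Since the metric is left invariant, I would first reduce the second order Lorentz equation $\nabla_{\dot\gamma}\dot\gamma=q\varphi\dot\gamma$ to the magnetized Euler--Arnold equation \eqref{eq:EA-mag} for the momentum $\mu=\mathcal{I}\varOmega$; writing $\mu=-Ae_1+Be_2+Ce_3$ this decouples into the scalar system with $\dot A=0$ together with a planar rotation for $(B,C)$. The constancy $A=\cos\sigma$ and the closed form \eqref{eq:4BC} show that the angular velocity $\varOmega(s)$ is completely determined by the initial data $(\cos\sigma,B_0,C_0)$, so it remains only to solve the first order reconstruction equation $\dot\gamma(s)=\gamma(s)\varOmega(s)$ subject to $\gamma(0)=\mathrm{Id}$.

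The main obstacle is that this reconstruction equation is nonautonomous, so one cannot simply exponentiate $\varOmega$. My approach would be to posit the product ansatz $\gamma(s)=\exp_G(sW)\exp_K(-sV)$ with $W=ae_1+be_2+ce_3\in\mathfrak{sl}_2\mathbb{R}$ and $V=ue_1\in\mathfrak{so}(2)$, motivated by the fact (Theorem \ref{thm:DZ}) that geodesics already take this form with $u=2a$. Computing $\varOmega=\mathrm{Ad}(\exp_K(sV))W-V$ and using that $\mathrm{Ad}(\exp_K(sV))$ fixes $e_1$ and rotates the plane spanned by $e_2,e_3$ through the angle $2us$, I would substitute into \eqref{eq:EA-mag} and find that the sole constraint imposed is $u=2a-q/2$. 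The three free parameters $a,b,c$ then allow the ansatz to realize any prescribed initial velocity, and since the Lorentz equation is a second order ODE its solution is uniquely determined by $\gamma(0)$ and $\dot\gamma(0)$; this uniqueness is what upgrades the ansatz from \emph{a} solution to \emph{the} solution.

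The final step is purely algebraic. Setting $X_{\mathfrak k}=-ae_1$ and $X_{\mathfrak m}=be_2+ce_3$ recasts the $K$-exponent as $2X_{\mathfrak k}+\tfrac{q}{2}\xi$. Because $K=\mathrm{SO}(2)$ is abelian, $\exp_K\{s(2X_{\mathfrak k}+\tfrac{q}{2}\xi)\}$ splits as $\exp_K\{s(2X_{\mathfrak k})\}\exp_K\{s(\tfrac{q}{2}\xi)\}$, and combining the first factor with the $\exp_G$ term reproduces exactly the geodesic $\gamma_X(s)$ of Theorem \ref{thm:DZ}. Reading off $X=\cos\sigma\,e_1+B_0e_2+C_0e_3$ as the initial velocity then yields $\gamma(s)=\gamma_X(s)\exp_K\{s(\tfrac{q}{2}\xi)\}$, which is the assertion of Theorem \ref{MagTrajProd_THM}. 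I expect the abelian splitting and the identification of the geodesic factor to be routine; the genuinely delicate point is ensuring the ansatz exhausts all trajectories, that is, correctly pairing the parameters $(a,b,c)$ with the initial data and invoking uniqueness of the Lorentz flow.
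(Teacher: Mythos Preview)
Your proposal is correct and follows essentially the same approach as the paper: posit the ansatz $\gamma(s)=\exp_G(sW)\exp_K(-sV)$, compute $\varOmega=\mathrm{Ad}(\exp_K(sV))W-V$, substitute into the magnetized Euler--Arnold system to obtain the single constraint $u=2a-\tfrac{q}{2}$, and then use the abelian splitting of $\exp_K$ together with Theorem~\ref{thm:DZ} to factor the result as $\gamma_X(s)\exp_K\{s(\tfrac{q}{2}\xi)\}$. Your explicit invocation of uniqueness for the Lorentz flow to promote the ansatz to the general solution is, if anything, more carefully stated than in the paper, which leaves this step implicit.
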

\begin{remark}{ \rm
Recall that contact magnetic curves in $\mathrm{SL}_2\mathbb{R}$ are slant 
(see \textit{e.g.}, \cite{DIMN1,IM1}).
Theorem~$\ref{MagTrajProd_THM}$ is another justification of this fact. 
Magazev, Shirokov and Yurevich studied 
integrability of magnetic trajectory on 
Lie groups of higher dimension \cite{MSY}.
}
\end{remark}

%%%%%%%%%%%%%%%%%%%%%%%%%%%%%%%%%

\appendix

\section{Remarks on homogeneous geodesics
of the form $\exp_G(sX)$}
In this appendix we investigate homogeneous geodesics
of the form $\exp_G(sX)$. 

\begin{lemma}
For $X=X_{\mathfrak{l}}+X_{\mathfrak{m}}\in\mathfrak{g}={\mathfrak{sl}}_2\mathbb{R}$, the 
following properties are mutually equivalent{\rm:}
\begin{itemize}
\item $X_{\mathfrak{k}}=0$ or $X_{\mathfrak{m}}=0$.
\item $\mathsf{U}(X,X)=0$.
\end{itemize}
\end{lemma}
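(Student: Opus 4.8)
The plan is to reduce the stated equivalence to a single bracket computation by invoking the explicit expression for the bi-invariance obstruction recorded above, namely $\mathsf{U}(X,Y)=[X_{\mathfrak k},Y_{\mathfrak m}]+[Y_{\mathfrak k},X_{\mathfrak m}]$. Setting $Y=X$ collapses this to $\mathsf{U}(X,X)=2[X_{\mathfrak k},X_{\mathfrak m}]$, so that $\mathsf{U}(X,X)=0$ if and only if $[X_{\mathfrak k},X_{\mathfrak m}]=0$. The whole lemma therefore amounts to the assertion that this single bracket vanishes exactly when one of the two summands is zero.

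The implication from ``$X_{\mathfrak k}=0$ or $X_{\mathfrak m}=0$'' to ``$\mathsf{U}(X,X)=0$'' is immediate, since either hypothesis kills the bracket. For the converse I would exploit the structural feature that $\mathfrak k=\mathbb{R}e_1=\mathbb{R}\xi$ is one-dimensional and that $\mathrm{ad}(e_1)$ restricts to an \emph{invertible} operator on $\mathfrak m=\mathrm{span}\{e_2,e_3\}$. Concretely, the commutation relations $[e_1,e_2]=2e_3$ and $[e_3,e_1]=2e_2$ show that $\mathrm{ad}(e_1)$ maps $\mathfrak m$ into $\mathfrak m$ (as it must, by the reductive inclusion $[\mathfrak k,\mathfrak m]\subset\mathfrak m$) and is represented there, in the basis $\{e_2,e_3\}$, by the nonsingular skew matrix $\left(\begin{smallmatrix}0&-2\\2&0\end{smallmatrix}\right)$. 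One may note in passing that $\mathrm{ad}(e_1)|_{\mathfrak m}=2\varphi|_{\mathfrak m}$, so the nondegeneracy is just the invertibility of the Lorentz force on the contact distribution.

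Writing $X_{\mathfrak k}=\alpha e_1$, the bracket factors as $[X_{\mathfrak k},X_{\mathfrak m}]=\alpha\,\mathrm{ad}(e_1)(X_{\mathfrak m})$. If this vanishes and $\alpha\neq 0$, then invertibility of $\mathrm{ad}(e_1)|_{\mathfrak m}$ forces $X_{\mathfrak m}=0$; if instead $\alpha=0$ then $X_{\mathfrak k}=0$. Either way exactly one of the two components vanishes, which closes the equivalence. There is no genuine obstacle in this argument: the only point to keep straight is that $\mathrm{ad}(e_1)$ both preserves $\mathfrak m$ and acts nondegenerately on it, and both facts are transparent from the commutation relations already listed.
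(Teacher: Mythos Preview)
Your proof is correct. Both you and the paper begin by reducing to the criterion $[X_{\mathfrak k},X_{\mathfrak m}]=0$ via the formula $\mathsf{U}(X,Y)=[X_{\mathfrak k},Y_{\mathfrak m}]+[Y_{\mathfrak k},X_{\mathfrak m}]$, but you then diverge in how you analyze that bracket. The paper writes $X$ in matrix entries $X_{ij}$, expands $X_{\mathfrak k}$ and $X_{\mathfrak m}$ explicitly in the basis $\{E,F,H\}$, computes $[X_{\mathfrak k},X_{\mathfrak m}]=-\tfrac{1}{2}(X_{21}^2-X_{12}^2)H+(X_{21}-X_{12})X_{11}(E+F)$, and then splits into the cases ${}^t X=X$ versus $X_{11}=X_{12}+X_{21}=0$. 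Your argument is more structural: you use only that $\mathfrak k=\mathbb{R}e_1$ is one-dimensional and that $\mathrm{ad}(e_1)|_{\mathfrak m}=2\varphi|_{\mathfrak m}$ is invertible, so $\alpha\,\mathrm{ad}(e_1)X_{\mathfrak m}=0$ forces $\alpha=0$ or $X_{\mathfrak m}=0$. Your route is cleaner and makes the mechanism transparent (it would work verbatim in any Sasakian Lie group where $\xi$ spans $\mathfrak k$ and $\varphi$ is nondegenerate on the contact distribution); the paper's coordinate computation, on the other hand, yields the explicit conditions $X_{12}=X_{21}$ or $X_{11}=X_{12}+X_{21}=0$, which are exactly what is used immediately afterward in Proposition~\ref{prop:4.2} and the explicit parametrizations that follow.
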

\begin{proof}
For $X\in\mathfrak{g}$, its $\mathfrak{k}$-component and 
$\mathfrak{m}$-component are
\[
X_{\mathfrak k}=\frac{1}{2}\left(
\begin{array}{cc}
0 & X_{12}-X_{21}
\\
X_{21}-X_{12} & 0
\end{array}
\right), 
\quad 
X_{\mathfrak{m}}
=
\frac{1}{2}\left(
\begin{array}{cc}
2X_{11} & X_{12}+X_{21}
\\
X_{21}+X_{12} & -2X_{11}
\end{array}
\right),
\]
\[
-X_{\mathfrak k}+X_{\mathfrak m}
=
\frac{1}{2}\left(
\begin{array}{cc}
X_{11} & X_{21}
\\
X_{12} & -X_{11}
\end{array}
\right)
={}^t\!X.
\]
\[
\exp_{K}\{s(2X_{\mathfrak k})\}
=
\left(
\begin{array}{cc}
\cos\{s(X_{21}-X_{12})\} &-\sin\{s(X_{21}-X_{12})\\
\sin\{s(X_{21}-X_{12}) & \cos\{s(X_{21}-X_{12})\}
\end{array}
\right)
\]
Thus $X_{\mathfrak{k}}=0$ if and only if $X_{12}=X_{21}$. 
On the other hand we know that 
$\mathsf{U}(X,X)=0$ if and only if 
$[X_{\mathfrak{k}},X_{\mathfrak{m}}]=0$.
\[
X_{\mathfrak{k}}=\frac{1}{2}(X_{21}-X_{12})(F-E),
\quad 
X_{\mathfrak{m}}=\frac{1}{2}(X_{21}+X_{12})(F+E)+X_{3}H
\]
Hence
\[
[X_{\mathfrak{k}},X_{\mathfrak{m}}]
=-\frac{1}{2}(X_{21}^2-X_{12}^2)H
+(X_{21}-X_{12})X_{11}(E+F).
\]
Thus $[X_{\mathfrak{k}},X_{\mathfrak{m}}]=0$ 
if and only if 
${}^t\!X=X$ or $X_{11}=X_{12}+X_{21}=0$. 
In the first case, $X_\mathfrak{k}=0$. 
In the latter case, $X_\mathfrak{m}=0$.
\end{proof}
We obtain the following fact.
\begin{proposition}\label{prop:4.2}
A curve $\exp_G(sX)$ with $X\in\mathfrak{sl}_2\mathbb{R}$ is a
geodesic starting at the origin if and only if 
\begin{itemize}
\item either $X$ is a symmetric matrix,
\item or $X\in\mathfrak{k}$. In this case 
$\exp_{G}(sX)=\exp_{K}(sX)$. 
\end{itemize}
\end{proposition}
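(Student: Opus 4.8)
The plan is to exploit the special feature of a one‑parameter subgroup: its body angular velocity is constant, so the geodesic ODE collapses to a single algebraic condition on $X$, which the preceding Lemma then translates into the two stated alternatives.

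First I would compute the angular velocity of $\gamma(s)=\exp_G(sX)$. Since $\dot{\gamma}(s)=\gamma(s)X$, the left-invariant velocity is $\varOmega(s)=\gamma(s)^{-1}\dot{\gamma}(s)=X$, which is \emph{constant} in $s$; hence $\dot{\varOmega}=0$. Next I invoke the geodesic characterization of Section~1.2 (following Arnold), namely that $\gamma$ is a geodesic if and only if $\dot{\varOmega}=\mathrm{ad}^{*}(\varOmega)\varOmega$. With $\varOmega=X$ constant, this reduces to the purely algebraic equation $\mathrm{ad}^{*}(X)X=0$. Using the identity $-2\mathsf{U}(X,Y)=\mathrm{ad}^{*}(X)Y+\mathrm{ad}^{*}(Y)X$ with $Y=X$ gives $\mathrm{ad}^{*}(X)X=-\mathsf{U}(X,X)$, so the geodesic condition is exactly $\mathsf{U}(X,X)=0$.

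Finally I apply the preceding Lemma, which asserts that $\mathsf{U}(X,X)=0$ if and only if $X_{\mathfrak{k}}=0$ or $X_{\mathfrak{m}}=0$. The first case is $X={}^{t}\!X$, i.e.\ $X$ is symmetric; the second is $X\in\mathfrak{k}$. In the latter case, since $K=\mathrm{SO}(2)$ is a closed subgroup, the group exponential restricts to that of $K$, so $\exp_{G}(sX)=\exp_{K}(sX)$, giving the stated refinement. As a sanity check one can also verify both cases directly against Theorem~\ref{thm:DZ}: when $X_{\mathfrak k}=0$ one has $\gamma_{X}(s)=\exp_{G}(sX_{\mathfrak m})=\exp_{G}(sX)$, and when $X_{\mathfrak m}=0$ the factors $\exp_{G}(-sX_{\mathfrak k})\exp_{K}(2sX_{\mathfrak k})$ collapse to $\exp_{K}(sX_{\mathfrak k})$ by commutativity of $\mathrm{SO}(2)$.

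The one genuine insight — after which everything is routine — is the first observation that $\varOmega$ is constant along a one‑parameter subgroup, reducing the geodesic equation to $\mathsf{U}(X,X)=0$. Because the Arnold characterization is an equivalence rather than a mere necessary condition, both directions are obtained simultaneously, so no separate verification of sufficiency is required beyond citing the Lemma.
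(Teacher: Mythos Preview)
Your proof is correct and follows the same route as the paper: the paper states the Proposition immediately after the Lemma with only the phrase ``We obtain the following fact,'' leaving implicit precisely the link you have written out---that for a one-parameter subgroup the body angular velocity is constant, so the Euler--Arnold geodesic condition collapses to $\mathsf{U}(X,X)=0$, after which the Lemma applies. Your argument is thus a faithful (and slightly more explicit) rendering of the paper's own reasoning.
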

\begin{remark}{\rm In \cite{Ko}, Kowalski 
gave the following comment to \cite{IKOS3}:
\begin{quotation}
One remark to section 3.5: 
it is correct that not all 1-parameter subgroups of 
$(SL_{2}\mathbb{R},g_{\lambda})$ are geodesics and vice versa. 
Yet, because every such manifold is naturally reductive, 
all its geodesics are orbits of 1-parameter groups of isometries.
\end{quotation}
Proposition \ref{prop:4.2} gives a more clear explanation 
for the comment by Kowalski. 
The $1$-parameter subgroups of $\mathrm{SL}_2\mathbb{R}$ 
mentioned in Kowalski's review 
mean $\{\exp_{G}(sX)\}_{s\in\mathbb{R}}$ for some 
$X\in\mathfrak{sl}_{2}\mathbb{R}$. 

On the other hand, since $\mathrm{SL}_2\mathbb{R}$ is 
naturally reductive with respect to the 
reductive decomposition 
$\mathfrak{g}\oplus\mathfrak{k}=\Delta\mathfrak{k}+\mathfrak{p}$, 
every geodesic starting at the origin $\mathrm{Id}$ is 
homogeneous and has the form
\[
\gamma(s)=\exp_{G\times K}\{s(X,Y)\}\,\mathrm{Id}
=\exp_{G}(sX)\exp_K(-sY)
\]
for some $(X,Y)\in\mathfrak{p}$.
As we quoted in Theorem \ref{thm:DZ}, 
the precise form of a homogeneous geodesic is
\[
\gamma(s)=
\exp_{G\times K}(sX)\,\mathrm{Id}
=\exp_{G}\{
s(-X_{\mathfrak{k}}+X_{\mathfrak m})\}
\exp_K\{
s(2X_{\mathfrak{k}})
\},\quad X\in\mathfrak{g}.
\]
However a curve of the form $\exp_{G}(sX)$ is a homogeneous 
geodesic if and only if either $X=X_{\mathfrak{m}}$ or 
$X=X_{\mathfrak{k}}$.
}
\end{remark}

Set $E_1=\sqrt{2}E$, $E_2=\sqrt{2}F$ and $E_3=H$ such that they are orthonormal with respect to
$\langle~,~\rangle$.

Let us express $X\in\mathfrak{g}$ as 
\[
X=aE_1+bE_2+cE_3=
\left(
\begin{array}{cc}
c& a\sqrt{2}\\
 b\sqrt{2} & -c
\end{array}
\right).
\]
Then
$X_{21}-X_{12}=\sqrt{2}(b-a)$ and $X_{11}=c$,
$X_{12}=\sqrt{2}a$ and $X_{21}=\sqrt{2}b$.

\[
X_{\mathfrak k}=\frac{1}{2}\left(
\begin{array}{cc}
0 & -\sqrt{2}(b-a)
\\
\sqrt{2}(b-a) & 0
\end{array}
\right), 
\quad 
X_{\mathfrak{m}}
=
\frac{1}{2}\left(
\begin{array}{cc}
2c & \sqrt{2}(b+a)
\\
\sqrt{2}(b+a) & -2c
\end{array}
\right),
\]
\[
-X_{\mathfrak k}+X_{\mathfrak m}
=
\frac{1}{2}\left(
\begin{array}{cc}
c & \sqrt{2}b
\\
\sqrt{2}a & -c
\end{array}
\right)
={}^t\!X.
\]
Let us describe these homogeneous geodesics explicitly.

Firstly we assume that $X=X_\mathfrak{m}$, i.e. $X={}^t\!X$ (that is $b=a$). 
Then
$\det X=-c^2-2b^2\leq 0$. 
We cannot have $\det X=0$, otherwise we obtain $X=0$, which is a contradiction.
 Set $\delta=\sqrt{c^2+2b^2}>0$. We get
 \[
 \exp_{G}(sX)
 =\left(
 \begin{array}{cc}
 \cosh(\delta s)+(c/\delta)\sinh(\delta s)
 &(b\sqrt{2}/\delta)\sinh(\delta s)\\
(b\sqrt{2}/\delta)\sinh(\delta s)
& 
 \cosh(\delta s)-(c/\delta)\sinh(\delta s)
 \end{array}
 \right)
 \]
 We compute now the coordinates $(x,y,\theta)$ of $\exp(sX)$ 
 by using Proposition \ref{Iwasawa}:
\begin{align*}
x(s)=&\frac{b\sqrt{2}\sinh(2\delta s)}
{\delta\cosh(2\delta s)-c\sinh(2\delta s)},
\\
y(s)=&\frac{\delta}
{\delta\cosh(2\delta s)-c\sinh(2\delta s)},,
\\
 e^{i\theta(s)}=&
 \frac{\cosh(\delta s)-\frac{c}{\delta}\sinh(\delta s)-i~\frac{b\sqrt{2}}{\delta}\sinh(\delta s)}
 {\sqrt{\cosh(2\delta s)-\frac{c}{\delta}\sinh(2\delta s)}}.
\end{align*}
 The projected curve $(x(s),y(s))$ is a geodesic in $\mathbb{H}^2(-4)$ given as follows:

If $b\not=0$, then 
\[
\left(x-\frac{c}{b\sqrt{2}}\right)^2
+y^2=\frac{\delta^2}{2b^2},\quad y>0.
\]
If $b=0$, 
\[
x=0, \quad y>0.
\]

Secondly we assume that $X=X_{\mathfrak{k}}$ (that is $c=0$, $a=-b$). Then
\[
X=\sqrt{2}b\left(
\begin{array}{cc}
0 & -1\\
1 & 0
\end{array}
\right).
\]
Hence
\[
\exp_{G}(sX)=\exp_{K}(sX)=
\left(
\begin{array}{cc}
\cos(\sqrt{2}bs) & -\sin(\sqrt{2}bs) \\
\sin(\sqrt{2}bs) & \cos(\sqrt{2}bs)
\end{array}
\right),
\]
that implies
\[ x(s)=0,\quad y(s)=1, \quad
\theta(s)=-\sqrt{2}bs.
\]

\begin{theorem}
All homogeneous geodesics 
of the form $\exp_{G}(sX)$ with 
$X=aE_1+bE_2+cE_3\in\mathfrak{g}$ belong to the following list
\begin{enumerate}
\item When $a=b\neq0$ and $\delta=\sqrt{2b^2+c^2}$, then 
\begin{align*}
x(s)=&\frac{b\sqrt{2}\sinh(2\delta s)}
{\delta\cosh(2\delta s)-c\sinh(2\delta s)},
\\
y(s)=&\frac{\delta}
{\delta\cosh(2\delta s)-c\sinh(2\delta s)},,
\\
 e^{i\theta(s)}=&
 \frac{\cosh(\delta s)-\frac{c}{\delta}\sinh(\delta s)-i~\frac{b\sqrt{2}}{\delta}\sinh(\delta s)}
 {\sqrt{\cosh(2\delta s)-\frac{c}{\delta}\sinh(2\delta s)}}.
\end{align*}
The projected curve is a geodesic in $\mathbb{H}^2(-4)$.
\item When $a=b=0$ then
$$
x(s)=0,\quad y(s)=e^{2cs} , \quad \theta(s)=0.
$$
The projected curve is the half-line geodesic $x=0$, $y>0$.
\item When $a=-b\not=0$ and $c=0$, then
\[
x(s)=0,\quad y(s)=1,\quad \theta(s)=-\sqrt{2}bs.
\]
This is a vertical geodesic with respect to the hyperbolic Hopf fibration.
The spur of this geodesic is the fiber over $(0,1)\in
\mathbb{H}^{2}(-4)$.
\end{enumerate}
\end{theorem}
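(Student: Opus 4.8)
The plan is to read the classification straight off Proposition \ref{prop:4.2}, which already isolates the two algebraic families of admissible $X$: a curve $\exp_G(sX)$ is a geodesic through the origin exactly when $X$ is symmetric (so $X=X_{\mathfrak m}$) or when $X\in\mathfrak{k}$ (so $X=X_{\mathfrak k}$). My first step is therefore to translate these two intrinsic conditions into conditions on the coordinates $(a,b,c)$ of $X=aE_1+bE_2+cE_3=\begin{pmatrix} c & a\sqrt{2}\\ b\sqrt{2} & -c\end{pmatrix}$. Symmetry ${}^t\!X=X$ forces $a\sqrt{2}=b\sqrt{2}$, i.e.\ $a=b$ (with $c$ free), while $X_{\mathfrak m}=0$ forces simultaneously $c=0$ and $a\sqrt2+b\sqrt2=0$, i.e.\ $a=-b$. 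Thus the admissible directions split into the symmetric family $\{a=b\}$ and the Reeb family $\{c=0,\ a=-b\}$, whose only common member is $X=0$, which is excluded.

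Second, I would dispatch the symmetric family. The matrix exponential in this case, in terms of $\delta=\sqrt{2b^2+c^2}$, has already been computed above, and substituting that matrix into the explicit Iwasawa decomposition of Proposition \ref{Iwasawa} produces precisely the coordinate functions $(x(s),y(s),\theta(s))$ listed in item (1). The one point needing separate attention is the degenerate subcase $b=0$: then $X=cE_3$ is diagonal, so $\exp_G(sX)=\begin{pmatrix} e^{cs} & 0\\ 0 & e^{-cs}\end{pmatrix}$, and Proposition \ref{Iwasawa} gives immediately $x\equiv0$, $y=e^{2cs}$, $\theta\equiv0$, which is item (2). To confirm the partition is exhaustive I would check that item (2) is genuinely the $b=0$ specialization of item (1): with $\delta=|c|$ one finds $\delta\cosh(2\delta s)-c\sinh(2\delta s)=|c|\,e^{-2cs}$ for either sign of $c$, whence $y(s)=e^{2cs}$ and $e^{i\theta(s)}=1$, matching item (2) exactly.

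Third, for the Reeb family $c=0$, $a=-b\neq0$ one has $X=\sqrt{2}\,b\begin{pmatrix} 0 & -1\\ 1 & 0\end{pmatrix}\in\mathfrak{so}(2)$, so $\exp_G(sX)=\exp_K(sX)$ is a genuine rotation matrix, as already noted above. Its Iwasawa coordinates are read off at once as $x\equiv0$, $y\equiv1$, $\theta(s)=-\sqrt{2}\,bs$, giving item (3); geometrically this spur is the $\mathrm{SO}(2)$-fiber of the hyperbolic Hopf fibration over $i=(0,1)\in\mathbb{H}^2(-4)$. Since Proposition \ref{prop:4.2} guarantees that every geodesic of the form $\exp_G(sX)$ lies in one of the two families, the three items are jointly exhaustive, which completes the argument.

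The only mildly delicate computation is the Iwasawa projection in the symmetric case: one must evaluate $p_{21}^2+p_{22}^2$ for $p=\exp_G(sX)$ and simplify the resulting hyperbolic expressions so that $y(s)$ and the half-angle datum $e^{i\theta(s)}$ emerge in the stated closed form. That step is routine, and indeed is carried out in the discussion preceding the statement; the genuine content of the theorem is entirely the reduction, via Proposition \ref{prop:4.2}, to the two algebraic families, after which each of the three items is a finite and elementary verification.
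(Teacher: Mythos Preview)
Your proposal is correct and follows essentially the same route as the paper: the theorem is simply a summary of the computations laid out in the discussion preceding it, namely the reduction via Proposition~\ref{prop:4.2} to the two algebraic families $X={}^t\!X$ and $X\in\mathfrak{k}$, followed by the explicit matrix exponentials and the Iwasawa coordinates from Proposition~\ref{Iwasawa}. Your extra check that item~(2) is the $b\to 0$ specialization of item~(1) is a nice consistency observation that the paper does not spell out, but otherwise the arguments coincide.
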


\section{Sasakian space forms of constant holomorphic sectional curvature $c<-3$.}

Let $\mathscr{M}^{3}(c)=(\mathscr{M}^{3}(c),\bar{\varphi},\bar{\xi},\bar{\eta},\bar{g})$ be a $3$-dimensional 
Sasakian space form of constant 
holomorphic sectional curvature $c<-3$. 
Let us perform the following 
deformation of the structure tensor fields 
(Tanno's $\mathcal{D}$-homothetic deformation):
\begin{equation}
g:=-\frac{c+3}{4}\bar{g}+\frac{(c+3)(c+7)}{16}\bar{\eta}\otimes\bar{\eta}
\quad 
\varphi:=\bar{\varphi},
\quad
\xi:=-\frac{4}{c+3}\bar{\xi},
\quad
\eta:=-\frac{c+3}{4}\bar{\eta}.
\end{equation}
Then the new structure $(\varphi,\xi,\eta,g)$ is a 
Sasakian structure of constant holomorphic sectional 
curvature $-7$. Hence $\mathscr{M}^{3}(c)$ is locally 
isomorphic to $\mathrm{SL}_{2}\mathbb{R}$ equipped with a  
Sasakian structure of constant holomorphic sectional curvature $-7$. 

Conversely, every $3$-dimensional 
Sasakian space form of constant 
holomorphic sectional curvature $c<-3$ is locally 
obtained as the $\mathcal{D}$-homothetic deformation of 
$\mathrm{SL}_{2}\mathbb{R}$ equipped with a  
Sasakian structure of constant holomorphic sectional curvature $-7$.

Thus we may identify $\mathcal{M}^{3}(c)$ with 
the real special linear group $\mathrm{SL}_{2}\mathbb{R}$ 
equipped with a left invariant Sasakian structure 
$(\bar{\varphi},\bar{\xi},\bar{\eta},\bar{g})$ defined by
\[
\bar{g}:=-\frac{4}{c+3}\,g+\frac{4(c+7)}{(c+3)^2}\,\eta\otimes\eta,
\quad 
\bar{\varphi}:=\varphi:=\bar{\varphi},
\quad
\bar{\xi}:=-\frac{c+3}{4}\bar{\xi},
\quad
\bar{\eta}:=-\frac{4}{c+3}\bar{\eta}.
\]
The Sasakian space form $(\mathrm{SL}_2\mathbb{R},\bar{\varphi},\bar{\xi},\bar{\eta},\bar{g})$ is 
a naturally reductive homogeneous space $(G\times K)/\Delta K$ with reductive decomposition
$\mathfrak{g}\oplus\mathfrak{k}=\Delta\mathfrak{k}\oplus\mathfrak{p}_{c}$, where
\[
\mathfrak{p}_{c}
=\left\{\left.
\left(
V+W,-\frac{c-1}{4}W
\right)
\>\right|\>V\in\mathfrak{m},\>
W\in\mathfrak{k}
\right\}.
\]
Theorem~\ref{thm:DZ} and Theorem~\ref{MagTrajProd_THM} may be modified as follows:

\begin{theorem}
The geodesic $\gamma_{X}(s)$ starting at the origin $\mathrm{Id}$ 
of the $\mathrm{SL}_{2}\mathbb{R}$ of holomorphic 
sectional curvature $c<-3$ with initial velocity 
$X=X_{\mathfrak{k}}+X_{\mathfrak{m}}\in 
\mathfrak{g}$ is given 
explicitly by
\begin{equation}\label{eq:homgeo}
\gamma_{X}(s)=\exp_{G\times K}(sX)
=\exp_{G}
\left\{s
\left(
X_{\mathfrak{m}}+
\frac{4}{c+3}X_{\mathfrak{k}}
\right)
\right\}
\,
\exp_{K}
\left\{
s\left(
\frac{c-1}{c+3}X_{\mathfrak k}
\right)
\right\}.
\end{equation}
\end{theorem}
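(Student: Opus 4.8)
The plan is to mirror the proof of Theorem~\ref{thm:DZ}, exploiting the fact asserted just above that the deformed space $(\mathrm{SL}_2\mathbb{R},\bar{g})$ is naturally reductive with respect to the decomposition $\mathfrak{g}\oplus\mathfrak{k}=\Delta\mathfrak{k}\oplus\mathfrak{p}_c$. For a naturally reductive homogeneous space every geodesic through the origin is a homogeneous geodesic, that is, an orbit $s\mapsto\exp_{G\times K}(sZ)\cdot\mathrm{Id}$ of the one-parameter subgroup generated by the element $Z\in\mathfrak{p}_c$ whose induced vector field at $\mathrm{Id}$ equals the prescribed initial velocity. Thus the whole problem reduces to identifying the correct $Z\in\mathfrak{p}_c$ and then writing out its orbit explicitly; this is exactly the logic already used to establish Theorem~\ref{thm:DZ}.

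First I would compute the differential at the identity of the orbit map $\pi\colon G\times K\to\mathrm{SL}_2\mathbb{R}$, $(a,b)\mapsto ab^{-1}$. Differentiating the curve $\exp_G(sX_1)\exp_K(-sY_1)$ at $s=0$ gives $d\pi_{\mathrm{Id}}(X_1,Y_1)=X_1-Y_1$, where the second factor $Y_1\in\mathfrak{k}$ is regarded inside $\mathfrak{g}$. Writing a general element of $\mathfrak{p}_c$ as $(V+W,-\tfrac{c-1}{4}W)$ with $V\in\mathfrak{m}$ and $W\in\mathfrak{k}$, this yields $d\pi_{\mathrm{Id}}(V+W,-\tfrac{c-1}{4}W)=V+\tfrac{c+3}{4}W$.

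Next I would solve for the $\mathfrak{p}_c$-element realizing the initial velocity $X=X_{\mathfrak{k}}+X_{\mathfrak{m}}$. Matching the $\mathfrak{m}$- and $\mathfrak{k}$-components in $V+\tfrac{c+3}{4}W=X_{\mathfrak{m}}+X_{\mathfrak{k}}$ forces $V=X_{\mathfrak{m}}$ and $W=\tfrac{4}{c+3}X_{\mathfrak{k}}$, where the hypothesis $c\neq -3$ is used to invert the coefficient. Hence the corresponding element of $\mathfrak{p}_c$ is $\bigl(X_{\mathfrak{m}}+\tfrac{4}{c+3}X_{\mathfrak{k}},\,-\tfrac{c-1}{c+3}X_{\mathfrak{k}}\bigr)$, and writing out its orbit $\exp_G(sX_1)\exp_K(-sY_1)$ produces precisely \eqref{eq:homgeo}. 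As a consistency check, specializing to $c=-7$ gives $\tfrac{4}{c+3}=-1$ and $\tfrac{c-1}{c+3}=2$, so the formula collapses to $\exp_G\{s(-X_{\mathfrak{k}}+X_{\mathfrak{m}})\}\exp_K\{s(2X_{\mathfrak{k}})\}$, recovering Theorem~\ref{thm:DZ}.

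The computations are entirely routine linear algebra on $\mathfrak{g}\oplus\mathfrak{k}$, and I do not expect a genuine obstacle. The only points demanding care are the bookkeeping of the two copies of $\mathfrak{k}$—the isotropy-diagonal copy inside $\Delta\mathfrak{k}$ versus the second-factor copy appearing in the definition of $\mathfrak{p}_c$—and pinning down the correct sign and coefficient in $d\pi_{\mathrm{Id}}$. Everything else amounts to invoking the naturally-reductive characterization of geodesics (already used in the proof of Theorem~\ref{thm:DZ}) without re-deriving it, and verifying by construction that the solved-for element indeed lies in $\mathfrak{p}_c$.
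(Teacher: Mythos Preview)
Your proposal is correct and follows precisely the route the paper sets up: the paper states this theorem without an explicit proof, but immediately before it provides the naturally reductive decomposition $\mathfrak{g}\oplus\mathfrak{k}=\Delta\mathfrak{k}\oplus\mathfrak{p}_c$, which is exactly the ingredient your argument uses. Your computation of $d\pi_{\mathrm{Id}}$ and the identification of the correct element of $\mathfrak{p}_c$ are accurate, and this is the same ``homogeneous geometric interpretation'' the paper already spelled out after Theorem~\ref{thm:DZ} for the case $c=-7$.
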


\begin{theorem}
Every contact magnetic curve $\gamma$ starting at the origin $\mathrm{Id}$ 
of the $\mathrm{SL}_{2}\mathbb{R}$ of holomorphic 
sectional curvature $c<-3$ with initial 
velocity $X\in T_{\mathrm{Id}}\mathrm{SL}_{2}\mathbb{R}=\mathfrak{g}$ 
and with charge $q$ is the product of the homogeneous 
geodesic $\gamma_{X}(s)$ and the charged Reeb flow $\exp_{K}\{s(\frac{q}{2}\bar{\xi})\}$. 
\end{theorem}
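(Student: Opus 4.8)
The plan is to mirror, in the $\mathcal{D}$-homothetically deformed setting, the argument that established Theorem~\ref{MagTrajProd_THM}. First I would record that the deformed structure $(\bar\varphi,\bar\xi,\bar\eta,\bar g)$ is again Sasakian, so its contact magnetic field $\bar F=d\bar\eta$ has Lorentz force, computed with respect to $\bar g$, equal to $\bar\varphi=\varphi$; consequently the Lorentz equation reads $\bar\nabla_{\dot\gamma}\dot\gamma=q\,\varphi\,\dot\gamma$, where $\bar\nabla$ is the Levi-Civita connection of $\bar g$. The metric $\bar g$ is left invariant and, as recorded above, $(\mathrm{SL}_2\mathbb{R},\bar g)$ is naturally reductive with respect to $\mathfrak{g}\oplus\mathfrak{k}=\Delta\mathfrak{k}\oplus\mathfrak{p}_{c}$, so the Euler--Arnold machinery of Section~3 applies once the bi-invariance obstruction $\bar{\mathsf{U}}$ of $\bar g$ and the deformed analogue of the magnetized Euler--Arnold equation \eqref{eq:EA-mag} have been written down.

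Next I would test curves of the product form $\gamma(s)=\exp_{G}(sW)\exp_{K}(-sV)$ with $(W,V)\in\mathfrak{sl}_2\mathbb{R}\oplus\mathfrak{so}(2)$, exactly as in Section~4. The angular velocity $\bar\Omega(s)=\gamma(s)^{-1}\dot\gamma(s)=\mathrm{Ad}(\exp_{K}(sV))W-V$ is a purely group-theoretic quantity, so its components $w_1,w_2,w_3$ are still given by \eqref{eq:3.4} without change. Writing $W=ae_1+be_2+ce_3$ and $V=ue_1$ and feeding $\bar\Omega$ into the deformed magnetized Euler--Arnold equation, I expect the two nontrivial scalar equations to collapse, as before, to a single linear constraint that forces $V$ to equal its geodesic value shifted in the Reeb direction, namely $V=V_{\mathrm{geod}}-\tfrac{q}{2}\bar\xi$, where $W=W_{\mathrm{geod}}$ and $V_{\mathrm{geod}}$ are the data of the preceding geodesic theorem. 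The $c$-dependent constant governing the $a$-part of the constraint is already pinned down by that theorem through the requirement that $q=0$ reproduce $W_{\mathrm{geod}}=X_{\mathfrak m}+\tfrac{4}{c+3}X_{\mathfrak k}$ and $V_{\mathrm{geod}}=-\tfrac{c-1}{c+3}X_{\mathfrak k}$ with $X_{\mathfrak k}=-ae_1$.

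Then I would read off the factorization. Since $K=\mathrm{SO}(2)$ is abelian and $\bar\xi\in\mathfrak{k}$, the constraint $V=V_{\mathrm{geod}}-\tfrac{q}{2}\bar\xi$ makes the $\exp_{K}$-factor split,
\[
\exp_{K}(-sV)=\exp_{K}(-sV_{\mathrm{geod}})\,\exp_{K}(s\tfrac{q}{2}\bar\xi),
\]
whence $\gamma(s)=\exp_{G}(sW_{\mathrm{geod}})\exp_{K}(-sV_{\mathrm{geod}})\exp_{K}(s\tfrac{q}{2}\bar\xi)=\gamma_{X}(s)\,\exp_{K}\{s(\tfrac{q}{2}\bar\xi)\}$, with $\gamma_{X}$ the homogeneous geodesic of the preceding theorem and $X$ the common initial velocity. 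Finally, uniqueness of solutions of the Lorentz equation (a constant-speed second-order ODE with prescribed $\gamma(0)=\mathrm{Id}$ and $\dot\gamma(0)=X$) shows that every contact magnetic trajectory of charge $q$ arises in this way, which completes the proof.

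I expect the sole genuine obstacle to be the connection computation of the first paragraph: obtaining $\bar{\mathsf{U}}$, hence the deformed connection table and the deformed form of \eqref{eq:EA-mag}, and verifying that the contact angle is constant so that the $(B,C)$-system decouples into a rotation. Once the $c$-dependent frequency is identified from the $q=0$ geodesic case, the remaining steps are precisely the abelian-splitting and ODE-uniqueness arguments of Section~4 and introduce no new ideas.
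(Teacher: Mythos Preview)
Your proposal is correct and follows exactly the route the paper intends: the theorem is stated in Appendix~B without a separate proof, as a direct $\mathcal{D}$-homothetic modification of Theorem~\ref{MagTrajProd_THM}, and your plan---recomputing $\bar{\mathsf U}$ for $\bar g$, running the magnetized Euler--Arnold argument on the ansatz $\exp_{G}(sW)\exp_{K}(-sV)$, reading off the Reeb-shifted constraint on $V$, and concluding by the abelian splitting of $\exp_{K}$ plus ODE uniqueness---is precisely that modification. The only labor, as you note, is the deformed connection table; once the $q=0$ limit reproduces \eqref{eq:homgeo} the rest is identical to Section~4.
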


{\bf Acknowledgement.}
The first named author was partially supported by JSPS KAKENHI JP19K03461 and JP23K03081.
The second named author was partially supported by Romanian Ministry of Research, 
Innovation and Digitization, within Program 1 – Development of the national 
RD system, Subprogram 1.2 – Institutional Performance – RDI excellence funding projects, 
Contract no.11PFE/30.12.2021.

%%%%%%%%%%%%%%%%%%%%%%%%%%%%%%%

\end{document}